

\documentclass{aims} 
\usepackage{amsmath}
\usepackage{paralist}
\usepackage[misc]{ifsym}
\usepackage{epsfig} 
\usepackage{epstopdf} 
\usepackage[colorlinks=true]{hyperref}

\usepackage{dsfont}
\usepackage{amssymb}

\hypersetup{urlcolor=blue, citecolor=red}
\allowdisplaybreaks

\textheight=8.2 true in
 \textwidth=5.0 true in
  \topmargin 30pt
   \setcounter{page}{1}




\newtheorem{theorem}{Theorem}[section]
\newtheorem{corollary}[theorem]{Corollary}

\newtheorem{lemma}[theorem]{Lemma}

\theoremstyle{definition}
\newtheorem{definition}[theorem]{Definition}
\newtheorem{remark}[theorem]{Remark}

\newcommand{\ave}[1]{ \left[ #1 \right]}

\newcommand{\thm}[1]{Theorem~\ref{#1}}

\newcommand{\sect}[1]{Section~\ref{#1}}

\newcommand{\jap}[1]{\left\langle #1 \right\rangle}


\def \b {\beta}
\def \g {\gamma}
\def \d {\delta}
\def \e {\varepsilon}

\def \k {\kappa}
\def \l {\lambda}
\def \n {\nabla}

\def \o {\omega}

\def \D {\Delta}

\def \O {\Omega}


\def \bk {{\bf k}}
\def \bl {{\bf l}}

\def \bu {{\bf u}}


\def \cD {\mathcal{D}}
\def \cE {\mathcal{E}}
\def \cF {\mathcal{F}}
\def \cG {\mathcal{G}}
\def \cH {\mathcal{H}}
\def \cI {\mathcal{I}}

\def \cM {\mathcal{M}}

\def \cP {\mathcal{P}}

\def \cR {\mathcal{R}}



 \def \rme {\mathrm{e}}


\def \const {\mathrm{const}}

\def \one {{\mathds{1}}}

\DeclareMathOperator{\supp}{supp} %

\DeclareMathOperator{\Id}{Id} %
 %
 %
 %
 %


\def \p {\partial}

\def \HI {H\"older inequality}

\def \CK{Csisz\'ar-Kullback inequality}


\def \dx  {\, \mbox{d}x}

\def \dt  {\, \mbox{d}t}

\def \domega  {\, \mbox{d}\omega}
\def \dy  {\, \mbox{d}y}
\def \dz  {\, \mbox{d}z}

\def \dmu  {\, \mbox{d}\mu}

\def \dk  {\, \mbox{d}\kappa}
\def \domega  {\, \mbox{d}\omega}

\def \dtau  {\, \mbox{d}\tau}
\def \drho  {\, \mbox{d}\rho}

\def \dv  {\, \mbox{d} v}

\def \ddt  {\frac{\mbox{d\,\,}}{\mbox{d}t}}


\def \domain {{\O \times \R^n}}

\def \BL {{\mathrm{BL}}}


\def \cMcs {\cM_{\mathrm{CS}}}

\def \cMmt {\cM_{\mathrm{MT}}}

\def \cMfmt {\cM_{\mathrm{\phi}}}

\def \cMseg {\cM_{\mathrm{seg}}}

\def\R{\mathbb{R}}
\def\N{\mathbb{N}}
\def\T{\mathbb{T}}

\def\u{\textbf{u}}
\def\uavg{[\u]_{\rho}}
\def\uavgN{[\u^N]_{\rho^N}}

\def\uavgp{[\u']_{\rho'}}
\def\uavgpp{[\u'']_{\rho''}}

\def\feps{f^{\epsilon}}
\def\emodu{\rme(f^{\epsilon} | \u)}

\def\ueps{\bu^{\epsilon}}
\def\uavgeps{[\bu^{\epsilon}]_{\rho^{\epsilon}}}

\def\rhoeps{\rho^{\epsilon}} 
\def\steps{\st^{\epsilon}}

\def\mueps{\mu^{\epsilon}}
\def\Heps{\cH_{\epsilon}}
\def\Geps{\cG_{\epsilon}}
\def\Ieps{\cI_{\epsilon}}
\def\kappaeps{\kappa^{\epsilon}}
\def\Reps{\cR^{\epsilon}}

\def \st {\mathrm{s}}
\def \wt {\mathrm{w}}


\title[Microscopic, mesoscopic, macroscopic descriptions]
{Microscopic, mesoscopic, and macroscopic descriptions of the Euler alignment system with adaptive communication strength} 

\author[Roman Shvydkoy and Trevor Teolis]{}

\subjclass{35Q35, 35Q83, 35Q84 37A60, 92D25}
\keywords{mean field limit, hydrodynamic limit, collective behavior, alignment, Cucker-Smale model, Motsch-Tadmor model, environmental averaging}



\thanks{$^*$Corresponding author: Trevor Teolis}


\begin{document}
\maketitle

\centerline{\scshape
Roman Shvydkoy$^{{\href{mailto:shvydkoy@uic.edu}{\textrm{\Letter}}}1}$
and Trevor Teolis$^{{\href{mailto:tteoli2@uic.edu}{\textrm{\Letter}}}*1}$}

\medskip

{\footnotesize
 \centerline{$^1$University of Illinois at Chicago, United States}
} 

\medskip


\bigskip



\begin{abstract}
The $\st$-model, introduced by the first author in [\textit{Environmental averaging}. EMS Surv. Math. Sci., 11 (2024), no. 2, 277–413], generalizes the Cucker-Smale model by allowing a broader class of velocity averaging protocols, while preserving the 1D conservation law: $\partial_t e + \partial_x (ue) = 0$, $e = \partial_x u + \st$.
  As a continuation of our previous joint work, [\textit{Well-posedness and long time behavior of the Euler Alignment System with adaptive communication strength}, accepted at the Abel Symposium Proceedings, also  arXiv:2310.00269, 2023],
  this paper aims to establish the physical foundations of the $\st$-model by deriving and justifying its microscopic and mesoscopic formulations.
  A distinctive feature of the microscopic system is that it is a discrete-continuous system:
  the position and velocity of the particles are discrete, while $\st$ is an active continuum scalar field. 
  We rigorously derive the mesoscopic description via the mean-field limit and further obtain the macroscopic equations in the monokinetic and Maxwellian regimes.
  Additionally, we analyze the long-time behavior of the kinetic Fokker-Planck-Alignment equation by establishing the relaxation to the Maxwellian in 1D for Favre-averaged velocity fields.
  As a supplement to our previous numerical results, we present particle simulations which reinforce the qualitative similarities between the $\st$-model and the Motsch-Tadmor model. 
\end{abstract}

\tableofcontents



\section{Introduction}
Many models of collective dynamics that describe emergent phenomena incorporate alignment forces as a  driving mechanism of self-organization. 
Notable examples include Reynolds'  classical 3Zone model of flocking \cite{Rey1987}, the popular Viscek model of self-driven particles \cite{VCBCS1995,VZ2012}, Kuramoto synchronization \cite{Kur1975}, and the recently introduced Cucker-Smale alignment model with  metric  communication protocol \cite{CS2007a,CS2007b}. 
The objective of these models is not to precisely replicate physical systems, but rather to capture the core dynamics of emergent phenomena for a large class of communicating agents.  
This leads to a basic question:  
does the model describe a large class of communicating agents 
and, at the same time, remain mathematically tractable in the sense of well-posedness analysis and provability of the emergent phenomena?  

In this paper, we focus on the adaptive strength model introduced in \cite{S-EA}, or $\st$-model for short, which is a variant of the Cucker-Smale model designed to address this question.
The Cucker-Smale model is given by 
\begin{align}
  \label{CS_discrete}
  \begin{cases}
      \dot{x_i} = v_i \\
      \dot{v_i} = \lambda \sum_{j=1}^N m_j \phi(x_i - x_j)(v_j - v_i) .
  \end{cases}
\end{align}
where $x_i$, $v_i$ are the velocity and positions of the agents, $\lambda > 0$ is a scalar that affects the strength of the alignment force, and $\phi$ is a radially decreasing non-degenerate smooth kernel, originally set as $\phi(r) = \frac{1}{(1+ r^2)^{\b/2}}$. 
Unlike previous models where the alignment results were conditioned on the perpetual connectivity of the flock, this model guarantees the unconditional alignment 
\[
\max_{i=1,\ldots,N} |v_i - \bar{v}| \leq C e^{-\d t}, \qquad \max_{i,j=1,\ldots,N} |x_i - x_j| \leq \bar{D},
\]
 for any solution provided that the kernel facilitates sufficiently strong long-range communication between agents:   $\int_0^\infty \phi(r) = \infty$, see \cite{CS2007a,HT2008,HL2009}. In other words, the provability of the emergent phenomena depend only on the parameters of the model. This proved to be useful in many practical situations described in \cite{ABFHKPPS,Darwin,Tadmor-notices,S-book,MMPZ-survey}. Moreover, the result holds under the large crowd limit $N\to \infty$, see \cite{CFRT2010,TT2014}.  
 
Despite its success,  the Cucker-Smale model does not respond well in certain scenarios. 
Motsch and Tadmor argued in  \cite{MT2011} that in heterogeneous formations when, say, a small sub-flock separates itself from a distant large flock, its internal forces become annihilated by the latter if subjected to the Cucker-Smale communication protocol creating unrealistic behavior. A proposed renormalization of the averaging operation
given by 
\begin{align}
  \label{MT_discrete}
  \begin{cases}
      \dot{x_i} = v_i \\
      \dot{v_i} = \frac{\lambda}{\sum_{j=1}^N m_j \phi(|x_i - x_j|)} \sum_{j=1}^N m_j \phi(x_i - x_j)(v_j - v_i) . 
  \end{cases}
\end{align}
restored the balance of forces and similar alignment results  were proved for the new system in \cite{MT2014}.  
Such a renormalization, however, makes the alignment force asymmetric and more singular, which is the reason for the lack of a coherent well-posedness theory at the moment. 

Both models \eqref{CS_discrete} and \eqref{MT_discrete} have a communication strength expressed by a coefficient in front of the alignment force as a prescribed function of the  density of the flock. 
In contrast, the $\st$-model introduces an adaptive strength which evolves in time according to its own transport equation.  
This preserves the regularity properties of the Cucker-Smale model while improving performance in heterogeneous formations.
 However, the lack of control over kinematic properties of the alignment force makes the flocking analysis more challenging.  
 Nevertheless, many classical long-time behavior and well-posedness results of the Cucker-Smale model extend to the $\st$-model, see \cite{ST2024} for a study at the macroscopic level of description.   

This paper aims to (1) introduce the corresponding microscopic and mesoscopic counterparts; (2) to 
establish a rigorous passage between the levels of description; 
and (3) to establish relaxation to a thermodynamic state in one dimension when the velocity averaging itself is given by the Favre averaging such as in  \eqref{CS_discrete}-\eqref{MT_discrete}. 
Before we turn to the technical description of the results, let us present a more detailed motivation of the $\st$-model.

\subsection{Environmental averaging and the $\st$-model} At the macroscopic level of description, the models \eqref{CS_discrete} and \eqref{MT_discrete} above give rise (in the pressureless regime) to a class of Euler alignment systems
\begin{align}
  \label{CS_macroscopic_pressureless}
  \begin{cases}
      \partial_t \rho + \nabla \cdot (\u \rho) = 0 \\
      \partial_t \u + \u \cdot \nabla \u =  \st_{\rho} (\uavg - \u)
        \end{cases}
\end{align}
where $\rho$, $\u$ are the macroscopic density and velocity of the flock on the periodic environment $\T^n$,  $\st_{\rho}$ is a specific communication strength, and $\uavg $ is a density dependent averaging operation, see \cite{FK2019,S-book,S-hypo,CCh2021} for justifications of the macroscopic limit. For example, the CS-model has
\begin{equation}\label{CS}
\st_\rho = \rho \ast \phi, \qquad \ave{u}_\rho = \frac{(u \rho)\ast \phi}{\rho\ast \phi}, \tag{$\cMcs$}
\end{equation}
while the MT-model has
\begin{equation}\label{MT}
\st_\rho = 1, \qquad \ave{u}_\rho = \frac{(u \rho)\ast \phi}{\rho\ast \phi}.\tag{$\cMmt$}
\end{equation}
Many other alignment models can be expressed similarly. For example, the over-mollified version of \ref{MT} introduced in \cite{S-hypo}, is given by
\begin{equation}\label{Mf}
\st_\rho = 1, \qquad  \ave{u}_\rho = \left( \frac{(u \rho)\ast \phi}{\rho\ast \phi} \right)\ast \phi , \tag{$\cMfmt$}
\end{equation}
and the segregation model, based on a partition of unity $\sum_{l=1}^L  g_l = 1$, is given by
\begin{equation}\label{Mseg}
\st_\rho = 1, \quad \ave{u}_\rho(x) = \sum_{l=1}^L g_l(x) \frac{\int_\O u g_l \rho \dy}{\int_\O g_l \rho \dy} . \tag{$\cMseg$}
\end{equation}

The dynamics, energy inequality, momentum conservation, and other physical properties of the system \eqref{CS_macroscopic_pressureless} are largely dictated by functional properties of the chosen family of pairs
\begin{equation*}
\cM = \{ (\st_\rho, \ave{\cdot}_\rho): \rho \in \cP\},
\end{equation*}
where $\cP$ is the space of probability measures on the torus. For example, conservation of momentum holds if 
\[
\int_{\T^n} \uavg \st_\rho \drho = \int_{\T^n} \u \st_\rho \drho
\]
for all $\u$ and $\rho$.  Under certain natural boundedness properties of the pairs, the family $\cM$ is called an {\em environmental averaging} model, see \cite{S-EA}.

The well-posedness theory for hydrodynamic systems, especially in multiple dimensions, is far from being settled, even for the original \ref{CS}-model.
The most satisfactory result was obtained by Carillo, Choi, Tadmor, and Tan \cite{CCTT2016} by noticing that in 1D the Cucker-Smale-based system possesses an extra conservation law:   
\begin{equation}\label{e:e}
  e = \partial_x u + \rho \ast {\phi}, \hspace{5mm} \partial_t e + \partial_x(ue) = 0.
\end{equation}
Thus, the regularity criterion in 1D is stated in terms of a threshold condition on the initial $e_0$: the solution remains smooth iff $e_0 \geq 0$. 
In fact, it is not only useful for global regularity.  It is also instrumental in various studies on long time behavior, aggregation phenomena, estimating disorder of the limiting distribution \cite{LS2019,ST2,LLST-mass}, and extensions to a class of uni-directional flows in multiple dimensions \cite{LS-uni1,LS-uni2}. 

Unfortunately, none of the other listed models, \ref{MT} in particular, satisfy the extra conservation law \eqref{e:e}.
Therefore, even in 1D the question of regularity for such models remains incomplete.

It was observed in \cite{S-EA} that the law \eqref{e:e}, with $e = \p_x u + \st_\rho$, is  equivalent to the continuity equation on the strength
\begin{equation}\label{e:scont}
  \partial_t \st_\rho + \partial_x (\st_\rho \ave{u}_\rho) = 0,
\end{equation}
which only holds, indeed, for the \ref{CS}-model. The idea behind the
 $\st$-model is to depart from prescribing strength as a function of $\rho$. Instead, let the strength evolve according to its own  continuity equation \eqref{e:scont}, thus, adding another unknown to the system:
 \begin{align}
  \label{SM_macroscopic_pressureless}
  \tag{SM}
  \begin{cases}
      \partial_t \rho + \nabla \cdot (\u \rho) = 0 \\
      \partial_t \st + \nabla \cdot (\st \uavg) = 0, \qquad \st \geq 0 \\ 
      \partial_t \u + \u \cdot \nabla \u = \st(\uavg - \u).
  \end{cases}
\end{align}
The new system, by design, admits the $e$-quantity in 1D
\begin{equation}\label{e:eslaw}
  e = \partial_x u + \st, \hspace{5mm} \partial_t e + \partial_x(ue) = 0.
\end{equation}

\begin{remark}
When $\ave{\cdot}_\rho$ is given by the Favre filtration, the natural choices of the initial strength are given by $\st_0 = \rho_0 \ast \phi$ and $\st_0 = 1$. 
If $\st_0 = \rho_0 \ast \phi$, we recover the Cucker-Smale system. If $\st_0 = 1$, the model reproduces  the Motsch-Tadmor protocol at the onset, and then naturally deviates from it following its own transport equation.  
\end{remark}
 
 Despite the departure from \ref{MT}, the asymptotic behavior of the $\st$-model is qualitatively similar to that of the \ref{MT}-model in heterogeneous formations. 
 The numerical evidence based on particle simulations is provided in Section \ref{sec:numerics} (see also \cite{ST2024} for numerical evidence at the macroscopic level).  At the same time, thanks to the reclaimed conservation law \eqref{e:eslaw} the $\st$-model has similar analytical properties to \ref{CS}, which \ref{MT} lacks.  We recall some of these properties from our previous study \cite{ST2024}. 
 
 The study was performed for a particular form of the strength and for models with  velocity averaging given by the Favre filtration, which was introduced in the context
of non-homogeneous turbulence in \cite{Favre},
 \begin{align}
 \label{Favre_averaging}
  	\uavg = \u_F = \frac{(u \rho)\ast \phi}{\rho\ast \phi},
 \end{align}
  like in the models of interest, \ref{CS}, \ref{MT}. We represented $\st$ as a weighted variant of the \ref{CS}-strength, 
\begin{equation}\label{e:sw}
 \st =(\rho\ast {\phi})  \wt .
\end{equation}
Since $\rho\ast {\phi}$ satisfies the same continuity equation as $\st$, the weight $\wt$ satisfies the pure transport equation, and thus the system becomes
\begin{align}
  \label{eqn:WM_macroscopic}
  \tag{WM}
  \begin{cases}
      \partial_t \rho + \nabla \cdot (\u \rho) = 0 \\
      \partial_t \wt + \u_F \cdot \nabla \wt = 0, \qquad \wt \geq 0  \\
      \partial_t \u + \u \cdot \nabla \u = \wt \big( (\bu \rho) \ast \phi - \u (\rho \ast \phi) \big).
  \end{cases}
\end{align}
We call this variant of the $\st$-model, the $\wt$-model.
Here is a brief summary of results obtained for the $\wt$-model in \cite{ST2024}:
\begin{enumerate} [(a)]
  \item Local well-posedness and global well-posedness for small data in multi-D;
  \item Global well-posedness in 1D and for unidirectional flocks  under the threshold $e \geq 0$;
  \item Cucker-Smale $L^{\infty}$-based alignment; conditional $L^2$-based alignment;
  \item Strong flocking in 1D (i.e. convergence of the density to a limiting distribution);
  \item Estimates on the limiting distribution of the flock in 1D.
\end{enumerate}

\subsection{Microscopic and mesoscopic levels of description} The study of \cite{ST2024} was done  at the level of the hydrodynamic description,  while the microscopic and mesoscopic levels have remained unattended and even undefined. For the classical \ref{CS}-model the mean-field limit has been established in \cite{HL2009}, and for more general models in \cite{S-book,S-EA}. Passing from the kinetic to the hydrodynamic level has been described in the monokinetic \cite{FK2019,CCh2021} and Maxwellian \cite{KMT2015} regimes for the Cucker-Smale model, and in \cite{S-EA} for more general models.

 The main issue that prevents the application of the classical recipe for the $\st$-model is that the system \eqref{SM_macroscopic_pressureless} has two distinct  characteristic flows, along $\u$ and $\uavg$, which need to be properly incorporated in the particle dynamics. In this paper, we propose to circumvent this problem by considering a hybrid version with discrete/continuous dynamics. 

In order to properly formulate these lower levels of description, we assume that the averaging operator has an integral representation and against a smooth kernel $\Phi_{\rho}$:  
\begin{align}
  \label{uavg_integral_representation}
  \uavg = \int \Phi_{\rho}(x,y) \u(y) \rho(y) \dy, \hspace{5mm} \Phi_{\rho} \geq 0 \text{ is smooth}.
\end{align}
We also assume that $\Phi_{\rho}$ is right stochastic:
\begin{align}
    \label{eqn:kernel_right_stochastic}
    \int_{\T^n} \Phi_{\rho}(x,y) \rho(y) \dy = 1
\end{align}
in order for the averaging to preserve constants and for it to be an order-preserving map: $L^\infty \to L^\infty$.  We refer to \eqref{reg1_kernel} and \eqref{reg2_kernel} for a more precise list of assumptions on the kernel, and to 
 Table~\ref{t:kernels} for the list of the reproducing kernels of our core models.  It should be noted that the needed regularity of the Favre-based model is satisfied only for non-degenerate communication kernels, 
 \begin{equation}\label{e:nondeg}
\inf_{\T^n} \phi > 0.
\end{equation}

\begin{table}
\begin{center}
\caption{Reproducing kernels}\label{t:kernels}
\begin{tabular}{  c | c | c | c  } 
 MODEL &     \ref{CS}  \&  \ref{MT}  &  \ref{Mf}  &  \ref{Mseg}  \\
  \hline
$\Phi_\rho$ &   $\displaystyle{ \frac{\phi(x-y)}{\rho\ast \phi(x)} }$ &$\displaystyle{\int_{\T^n} \frac{\phi(x-z) \phi(y-z)}{\rho\ast \phi(z)} \dz}$  &  $\displaystyle{\sum_{l=1}^L \frac{g_l(x) g_l(y)}{\int_{\T^n} \rho g_l \dy}}$  
 \end{tabular}
\end{center}
\end{table}
Under such assumptions, the velocity averaging operation yields a smooth field even with empirical data:
\begin{equation*}
	\begin{split}
	&\uavgN(x) =  \sum_{j=1}^N m_j \Phi_{\rho^N}(x, x_j) v_j \in C^\infty(\T^n), \\
	& \qquad   \rho^N = \sum_{j=1}^N m_j \delta_{x_j}, 
	 \quad \u^N = \sum_{j=1}^N v_j \one_{x_j}.
	\end{split}
\end{equation*}

The microscopic description of \eqref{SM_macroscopic_pressureless} is then given by an ODE system describing the position and velocity of particles coupled 
with a PDE describing the transport of the strength function:
\begin{align}
    \label{SM_microscopic}
    \begin{cases}  
        \dot{x_i} = v_i \\
        \dot{v_i} = \lambda \st(x_i) (\uavgN(x_i) - v_i ),  \\
        \partial_t \st + \nabla_x \cdot (\st \uavgN) = 0 .
    \end{cases}
\end{align}
The existence and uniqueness of solutions to \eqref{SM_microscopic} will be proved in \sect{sec:well-posedness_of_sm_microscopic}.
\begin{theorem}\label{t:gwpmicro}
The system \eqref{SM_microscopic} admits unique global solution 
\[
(\{x_i\}_{i=1}^N, \{v_i\}_{i=1}^N, \st) \in C([0,\infty); \R^{nN} \times  \R^{nN} \times H^k(\T^n)), \quad  k > n/2 + 2,
\]
from any data in the same class.
\end{theorem}

The corresponding kinetic model is given by a Vlasov-alignment equation coupled with the same transport of the strength function along the averaged macroscopic velocity:
\begin{align}
    \label{SM_kinetic}
    \begin{cases}
        \partial_t f + v \cdot \nabla_x f + \lambda \nabla_v \cdot (\st (v - \uavg) f) = 0 \\
        \partial_t \st + \nabla_x \cdot (\st \uavg) = 0 .
    \end{cases}
\end{align} 
Here, $\rho$ and $\rho \u$ are the macroscopic variables given by: 
\begin{align}
  \label{defn:density_momentum}
  \rho = \int_{\R^n} f \dv, \hspace{5mm} \rho \u = \int_{\R^n} v f \dv . 
\end{align}

A passage from \eqref{SM_microscopic} to \eqref{SM_kinetic} is facilitated through the mean field limit, which 
 in present settings consists of establishing the weak convergence of the empirical measures
\begin{equation*}
    \mu^N_t = \sum_{i=1}^N m_i \delta_{x_i(t)} \otimes \delta_{v_i(t)}  \to \mu_t
\end{equation*}
where $(x_i(t), v_i(t))_{i=1}^N$ solve the first two equations in \eqref{SM_microscopic}, as well as 
\[
\st^N \to \st
\]
in a stronger sense.  The stronger convergence of the strength is available due to the inherited regularity of the strength functions from the kernel.
More precisely,  the corresponding mean field limit theorem will be proven in Section \ref{sec:mean_field_limit}.  We denote by $W_1$ the classical Wasserstein-1 metric, see \sect{s:notation} for the definition.
\begin{theorem} 
  \label{thm:mean_field_limit}
  Suppose that the velocity averaging $\uavg$ has the integral representation \eqref{uavg_integral_representation} with 
  the kernel satisfying \eqref{reg1_kernel} and \eqref{reg2_kernel}. 
  Given $T>0$, $R>0$, $k \geq 0$,  $\mu_0 \in \cP(\T^n \times B_R)$, and $\st_0 \in C^{\infty}(\T^n)$, then there exists a unique 
  weak solution $(\mu, \st) \in C_{w^*}([0,T]; \cP(\T^n \times B_R)) \times C([0,T]; C^k(\T^n))$ to \eqref{SM_kinetic}.  
  Moreover, the solution can be obtained as follows:  if $\st_0^N = \st_0$ and the empirical measures $\mu_0^N$ are constructed from agents $(x_i^0,v_i^0) \in \T^n \times \R^n$ satisfying $W_1(\mu^N_0, \mu_0) \to 0$, then 
  \begin{enumerate} [(i)]
    \item $\sup_{t \in [0,T]} W_1(\mu^N_t, \mu_t) \to 0$, and 
    \item $\sup_{t \in [0,T]} \|\st_t^N - \st_t\|_{C^k} \to 0$ for any $k \geq 0$ where $\st^N$ and $\st$  solve the corresponding transport equations in \eqref{SM_microscopic}  and \eqref{SM_kinetic}, respectively.
  \end{enumerate}
\end{theorem}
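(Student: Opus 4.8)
### Proof proposal

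The plan is to follow the Cucker--Smale template sketched in Section~\ref{sec:intro_mean_field_limit}, but to interleave the strength estimates supplied by Propositions~\ref{prop:regularity_velocity_averaging} and~\ref{prop:regularity_strength} at every stage. First I would set up the coupled characteristic system: the particle flow $(X,V)$ driven by $F = \lambda\st(X)(\uavg(X)-V)$ together with the transport PDE $\partial_t\st + \nabla_x\cdot(\st\uavg)=0$. The key preliminary is the maximum principle on the velocity: since $\dot v_i = \lambda\st(x_i)(\uavg(x_i)-v_i)$ with $\uavg$ an average of the $v_j$, the support stays in $\T^n\times B_R$ for all $t\in[0,T]$, which gives $J\le MR<\infty$ uniformly in $N$. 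With $J$ under control, Proposition~\ref{prop:regularity_velocity_averaging} gives \eqref{reg1_uavg}--\eqref{reg2_uavg} and then Proposition~\ref{prop:regularity_strength} gives \eqref{reg1_st}--\eqref{reg2_st}, uniformly along the flow. This is the point where the argument differs from Cucker--Smale and is, I expect, the main obstacle: one must verify that the $C^{k+1}$ norms of $\st^N_t$ stay bounded uniformly in $N$ (not merely for each fixed $N$), so that the regularity fed into the stability estimates is $N$-independent. This is where the subordination of the strength regularity to the kernel regularity, rather than to the solution, is essential.

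Next I would prove the two characteristic estimates. For the deformation bound $\|\nabla X\|_\infty + \|\nabla V\|_\infty \le C(M,R,T)$, differentiate the characteristic ODEs in the initial data; the right-hand side involves $\nabla\st$, $\nabla\uavg$ composed with the flow, both controlled by \eqref{reg1_uavg} and \eqref{reg1_st}, so Gr\"onwall closes the estimate. For the stability bound $\|X'-X''\|_\infty + \|V'-V''\|_\infty \le C(M,R,T)\,W_1(\mu_0',\mu_0'')$, subtract the two characteristic systems; the difference of the forcing terms splits into (i) $\st'(X')-\st''(X'')$ and (ii) $\uavg'(X')-\uavg''(X'')$ pieces. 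Each further splits into a ``same argument, different coefficient'' term, bounded by $\|\st'-\st''\|_{C^1}$ resp. $\|\uavg'-\uavg''\|_{C^1}$, and a ``same coefficient, different argument'' term, bounded by Lipschitz continuity times $\|X'-X''\|_\infty + \|V'-V''\|_\infty$. The coefficient-difference terms are handled by \eqref{reg2_uavg} and \eqref{reg2_st}, which bound them by $\sup_t(W_1(\rho',\rho'') + W_1(\u'\rho',\u''\rho''))$; since $\rho = \pi_x{}_\#\mu$ and $\u\rho$ is the $v$-marginal flux, both are dominated by $W_1(\mu_t',\mu_t'')$, which by the already-established push-forward identity $\mu_t = (X,V)_\#\mu_0$ is in turn $\lesssim \|X'-X''\|_\infty + \|V'-V''\|_\infty + W_1(\mu_0',\mu_0'')$ (using the deformation bound for the last step). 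Feeding everything back, Gr\"onwall yields both \eqref{characteristic_deformation_estimates} and \eqref{characteristic_stability_estimates}, hence the Wasserstein stability \eqref{stability_Wasserstein}.

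From stability the existence and uniqueness follow by the standard completeness argument: choose $\mu_0^N$ with $W_1(\mu_0^N,\mu_0)\to 0$ and $\st_0^N=\st_0$; then $(\mu_t^N)$ is Cauchy in $C_{w^*}([0,T];\cP_M(\T^n\times B_R))$, converging to some $\mu$, and $(\st_t^N)$ is Cauchy in $C([0,T];C^k(\T^n))$ by \eqref{reg2_st} applied with $\rho'=\rho^N$, $\rho''=\rho^M$ and the just-proved control of $W_1$, converging to some $\st$. Passing to the limit in the weak formulation \eqref{CS_kinetic_weak} adapted to \eqref{SM_kinetic}: the linear terms converge by weak-$*$ convergence, and the nonlinear term $\lambda\st^N(v-\uavg[\mu^N])$ converges because $\st^N\to\st$ uniformly, $\uavg[\mu^N]\to\uavg[\mu]$ uniformly (by \eqref{reg2_uavg} and $W_1(\mu^N,\mu)\to 0$), and the support is uniformly compact so the test-function pairing is continuous. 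This identifies $(\mu,\st)$ as a weak solution. Uniqueness is immediate from \eqref{stability_Wasserstein} with $\mu_0'=\mu_0''$, and then (a) and (b) in the statement are exactly the convergence already extracted, with (b) holding for every $k\ge 0$ because \eqref{reg2_st} is available at every order. The hard part, to reiterate, is step one: securing the uniform-in-$N$ higher-order regularity of $\st^N$ along the flow, which is precisely what the kernel assumptions \eqref{reg1_kernel}--\eqref{reg2_kernel} and the a priori bound $J<\infty$ are designed to deliver.
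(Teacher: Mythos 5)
Your proposal is correct and follows essentially the same route as the paper: the maximum principle giving $J<\infty$, inherited regularity from the kernel via Propositions~\ref{prop:regularity_velocity_averaging} and~\ref{prop:regularity_strength}, deformation and continuity estimates on the characteristics closed by Gr\"onwall through the push-forward identity, Wasserstein-1 stability, and a completeness argument to extract and identify the limit. The only step you compress is the verification that the limiting strength solves the transport equation strongly, which the paper carries out explicitly via the characteristic flows of $\uavg$ and $\uavgN$ in Lemma~\ref{lmma:limit_is_a_soln}; your uniform-in-time $C^k$ convergence of $\st^N$ and $\uavgN$ supplies the same conclusion.
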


Taking moments of the $\st$-Vlasov-Alignment equation \eqref{SM_kinetic}, we obtain the following hydrodynamic system
\begin{align}
  \label{SM_vmoments}
  \begin{cases}
      \partial_t \rho + \nabla \cdot (\u \rho) = 0 \\
      \partial_t \st + \nabla \cdot (\st \uavg) = 0 \\ 
      \partial_t (\rho \u) + \nabla \cdot (\rho \u \otimes \u) + \nabla \cdot \cR = \rho \st(\uavg - \u), 
  \end{cases}
\end{align}
where $\cR$ is the Reynolds stress tensor 
\begin{align}
  \label{defn:reynolds_stress_monokinetic}
  \cR(t,x) = \int_{\R^n} (v - \u(t,x)) \otimes (v - \u(t,x)) f(t,x,v) \dv . 
\end{align}

Here we encounter the classical closure problem: the Reynolds stress still depends upon $f$. The classical way to enforce $\cR$ to close on the macroscopic variables is to consider mean free path limits $\e \to 0$ of solutions to the forced kinetic model enhanced with $\frac{1}{\e} F(f^\e)$,  where $F$ is either a pure local alignment  or a Fokker-Planck-alignment force. In the former case it drives the solution to monokinetic distribution $f^\e \to  \rho(t,x) \delta(v - \u(t,x)) $  and therefore the Reynolds stress term vanishes, $\cR = 0$, resulting in the pressureless Euler-alignment system. In the latter case, this leads to the local Maxwellian distribution $f^\e \to \frac{\rho(t,x)}{(2\pi)^{n/2}} e^{-\frac{|v - \u(t,x)|^2}{2}}$, and $\cR = \rho \Id$ resulting in the appearance of the isothermal pressure  $p = \rho$.

We prove adaptations of these limits to the $\st$-model. 
\begin{theorem} (Monokinetic limit)
	The macroscopic quantities $\rho$ and $\u \rho$ for solutions to the kinetic $\st$-model with pure local alignment force converge to solutions to the pressureless macroscopic system \eqref{SM_macroscopic_pressureless}.
\end{theorem}
\begin{theorem} (Maxwellian limit)
	When $\uavg = \u_F$ (i.e. the $\wt$-model), the macroscopic quantities $\rho$ and $\u \rho$ for solutions to the kinetic $\wt$-model with strong Fokker-Planck-alignment force converge to solutions to the macroscopic system with isothermal pressure \eqref{SM_hydrodynamic_maxwellian}.
\end{theorem}

The precise statements are given \thm{thm:monokinetic_limit} and  \thm{thm:maxwellian_limit}.
In general terms, these results are similar to those obtained for \ref{CS}-model in \cite{FK2019,KMT2015}. However, the incorporation of the adaptive strength requires more scrutiny.  For technical reasons, the Maxwellian limit is proved unconditionally in the particular case of the $\wt$-model only. To treat the general $\st$-model, solutions must satisfy a uniform bound on the strength-functions \eqref{e:seuniform}-- notably, this is the only assumption which separates us from a result for the general $\st$-model.

We also prove a conditional relaxation result in 1D.  The precise statement is given in Theorem \ref{thm:relaxation_wm}.
\begin{theorem} (Relaxation to the Maxwellian in 1D)
	When $\uavg = \u_F$ (i.e. the $\wt$-model), $n=1$, and the initial variation of $\wt$ is small, solutions to the kinetic Fokker-Planck-Alignment $\wt$-model \eqref{e:FPA} relax exponentially fast to the Maxwellian.
\end{theorem}

\subsection{Numerical evidence for similar qualitative behavior to Motsch-Tadmor}
\label{sec:numerics}
To make the case for physical relevance of the $\st$-model, 
we present numerical evidence, at the microscopic level, that the $\wt$-model with $\wt_0 = 1/(\rho_0 \ast {\phi})$, and hence the $\st$-model,
displays similar qualitative behavior to that of Motsch-Tadmor in heterogeneous formations.  
We first clarify the qualitative behavior that we are seeking. 

Following  \cite{MT2011} we let  $m$, $M$ with $M >\!\!> 1 \approx m$ be the masses of a separated small and large clusters in a flock, respectively.  
Then for any agent $i$ in the small cluster under the Cucker-Smale protocol, we expect to see a stalled dynamics
\[
\dot{v}_i \approx 0,
\]
while the large cluster evolves naturally.  Under the Motsch-Tadmor protocol we expect the dynamics of the small cluster to be dominated by its own local communication
\[
\dot{v}_i \approx \frac{\lambda}{\sum_{i' \in I}^N m_{i'} \phi(|x_i - x_{i'}|)} \sum_{i' \in I} m_{i'} \phi(|x_i - x_{i'}|) (v_{i'} - v_i)
\]
and since the local strength in this case $\sum_{i' \in I}^N m_{i'} \phi(|x_i - x_{i'}|) \sim 1$, we expect to observe the same Cucker-Smale dynamics as in a homogeneous formation.  We refer to   \cite{MT2011} for the detailed scaling computation.

In our $\wt$-model simulation, we are looking for the same  qualitative behavior as under the Motsch-Tadmor protocol. 
For comparison, we also show the Cucker-Smale simulation. 
We aim to see the following.
\begin{enumerate} 
  \label{numerics:expected_behavior}
  \item [(Q$_{cs}$)] For the Cucker-Smale model:  The small flock proceeds linearly as if there were no force on it. 
  \item [(Q$_{\wt}$)] For the $\wt$-model with $\wt_0 = 1/(\rho_0 \ast {\phi})$:  The small flock behaves according to Cucker-Smale, but independently of the large flock. 
  The velocities of the small flock will therefore align to the average velocity of the small flock. 
\end{enumerate}

The computation below is performed on the discrete $\wt$-model, a special case of \eqref{SM_microscopic}, which reads
\begin{equation}
  \label{WM_discrete}
  \begin{cases}
      \dot{x_i} = v_i \\
      \dot{v_i} = \lambda \wt(x_i) \sum_{j=1}^N m_j \phi(|x_i - x_j|) (v_j - v_i) \\
      \partial_t \wt + \uavgN \cdot \nabla_x \wt = 0 . 
  \end{cases}
\end{equation}

The solutions are computed on the 2D unit torus, $\T^2$.  
We consider initial data consisting of two clusters as described above.
The parameters of the experiment are as follows. 
\begin{itemize}
  \item The scalar strength of the alignment force is $\lambda = 10$.
  \item $\phi(r) = \frac{1}{(1 + r^2)^{80/2}}$.
  \item $\rho_0^N$ is identical in the Cucker-Smale and the $\wt$-model simulation.  It is shown in Figure \ref{plots:solns_CS} (and Figure \ref{plots:solns_WM_with_MT_data}) as the leftmost picture. 
\end{itemize}
The kernel is periodized so that the distance $r$ measures the distance on $\T^2$. 
The heavy cluster is indicated by black particles and 
the light cluster is indicated by white particles. 
Each black particle has about $100$ times the mass of a white particle. 

\begin{figure}[!h]

  \begin{center}
          \begin{tabular}{ccc}
              \includegraphics[width=0.3\linewidth]{\detokenize{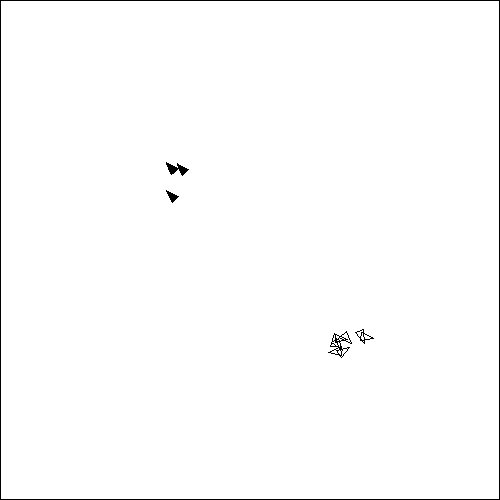}} &
              \includegraphics[width=0.3\linewidth]{\detokenize{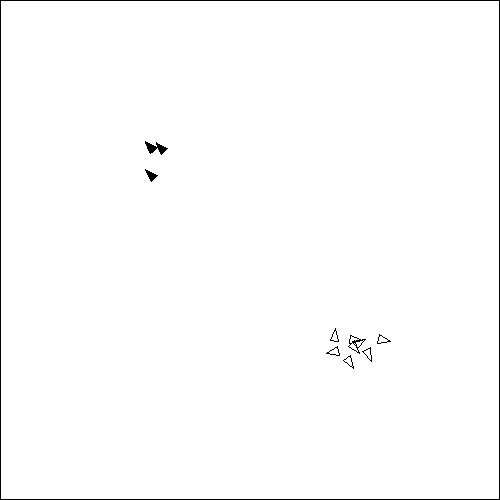}} &
              \includegraphics[width=0.3\linewidth]{\detokenize{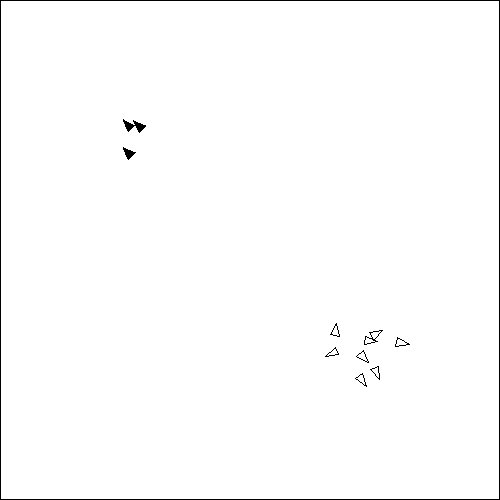}} 
          \end{tabular}
  \end{center} 
  \caption{The computed solution of the \textbf{Cucker-Smale} model at three different time steps. 
  The leftmost image is the initial configuration of the flock and time moves left to right.  The dynamics of the light (white) flock conforms with observation (Q$_{cs}$).
  \label{plots:solns_CS}
  }
\end{figure}

\begin{figure}[!h]
  \begin{center}
          \begin{tabular}{ccc}
              \includegraphics[width=0.3\linewidth]{\detokenize{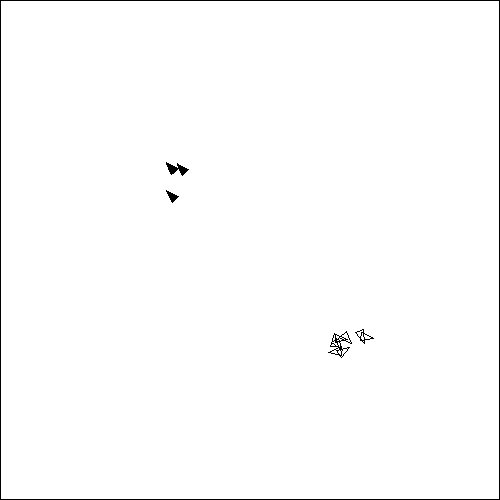}} &
              \includegraphics[width=0.3\linewidth]{\detokenize{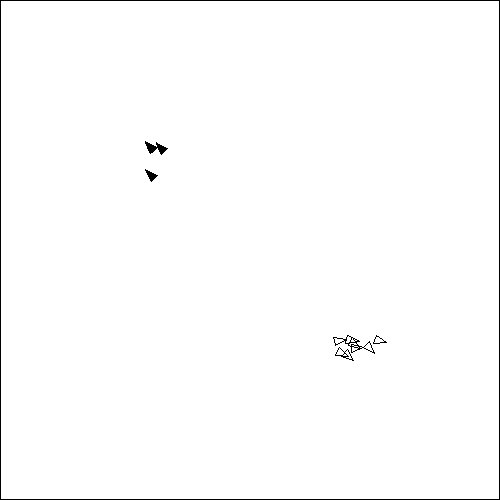}} &
              \includegraphics[width=0.3\linewidth]{\detokenize{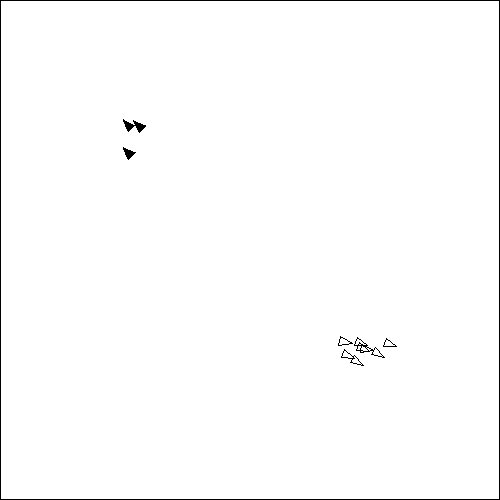}} 
          \end{tabular}
  \end{center} 
  \caption{The computed solution of the \textbf{$\wt$-model with Motsch-Tadmor initial data}, i.e. $\wt_0 = 1/(\rho \ast \phi)$, at three different time steps. 
  The leftmost image is the initial configuration of the flock and time moves left to right.  The dynamics of the light (white) flock conforms with observation (Q$_{\wt}$).
  \label{plots:solns_WM_with_MT_data}
  }
\end{figure}

The simulations shown in Figures \ref{plots:solns_CS} and \ref{plots:solns_WM_with_MT_data} show that the small agents in the Cucker-Smale case proceed linearly while the small agents in $\wt$-model case align to the average velocity of the small flock.  
This is the desired qualitative behavior.

\subsection{Outline}
\label{sec:outline}

The rest of the paper will be organized as follows. 
In Section \ref{sec:inherited_reg_from_the_kernel}, we will show that the velocity averaging and strength inherit the regularity of the kernel. 
In preparation for the mean field limit, the well-posedness of the microscopic $\st$-model \eqref{SM_microscopic} is established in Section \ref{sec:well-posedness_of_sm_microscopic}. 
The mean field limit is proved in Section \ref{sec:mean_field_limit}.  The hydrodynamic limits are proved in Section \ref{sec:hydrodynamic_limits}. 
Finally, in Section \ref{sec:relaxation}, we establish the relaxation to the Maxwellian in 1D for the mesoscopic $\wt$-model \eqref{e:FPA} provided the variation of the weight is small.

\subsection{Assumptions and Notation} \label{s:notation}

Our results will be stated for the torus $\T^n$.  
Letting $\rho, \rho', \rho'' \in \cP(\T^n)$, we assume throughout the paper, unless stated otherwise, that $\uavg$ has the integral representation $\eqref{uavg_integral_representation}$ 
and that its reproducing kernel $\Phi_{\rho}(x,y) \geq 0$ satisfies the following uniform regularity assumptions: 
\begin{align}
    \label{reg1_kernel}
    \tag{$\Phi$Reg1} &\| \partial_{x,y}^k \Phi_{\rho}\|_{\infty} \leq C_k \\
    \label{reg2_kernel}
    \tag{$\Phi$Reg2} &\| \partial_{x,y}^k (\Phi_{\rho'}- \Phi_{\rho''}) \|_{\infty} \leq C_k W_1(\rho', \rho'').
\end{align}
As previously noted, for the Favre-based models these assumptions are trivially satisfied if the defining communication kernel is non-degenerate \eqref{e:nondeg}.

$C^k$ is the space of $k$ continuously differentiable functions with the usual norm $\|f\|_{C^k} = \sum_{i=0}^k \|f\|_{C^i}$. We will use $\odot$ to denote component-wise multiplication of vectors (i.e. $a \odot b = (a_1 b_1, a_2 b_2, \dots)$).
Subscripts $-$ and $+$ will be used as a shorthand for infima and suprema. For instance, $f_- = \inf_{x \in \T^n} f(x)$, $f_+ = \sup_{x \in \T^n} f(x)$. 
We will use $(f_1, f_2) = \int_{\T^n} f_1 f_2 \dx$ to denote the $L^2$ inner product and $(f_1, f_2)_{h} = \int_{\T^n} f_1 f_2 h \dx$ to denote the 
weighted $L^2$ inner product.  Per  \cite{S-EA}, we will use the notation $\kappa_{\rho} = \st \rho$ for the kinematic strength measure.
For instance, $(\cdot, \cdot)_{\kappa_{\rho}}$ denotes the $L^2$ inner product with respect to the measure $\kappa_{\rho}$.  We also denote 
\[
L^p(\rho) = \{ f \in \cD': \int_{\T^n} |f|^p \rho \dx <\infty\}.
\]

We will use the dual definition of the Wasserstein-1 metric on the space $\cP(\O)$, where $\O$ is a Borel space:
\begin{align*}
  W_1(\mu, \nu) = \sup_{\| \n g \|_\infty \leq 1} \big| \int_{\O} g(\o) (d\mu(\o) - d\nu(\o)) \big| ,
\end{align*}
and the Bounded Lipschitz metric which can be applied to an arbitrary pair of signed measures
\begin{align*}
  W_{\BL}(\mu, \nu) = \sup_{\|g\|_\infty, \| \n g \|_\infty \leq 1} \big| \int_{\O} g(\o) (d\mu(\o) - d\nu(\o)) \big| .
\end{align*}
We also use the classical definition of the Wasserstein-2 metric: 
\begin{equation*}
    W_2^2(\mu, \nu)= \inf_{\gamma \in \Pi(\mu, \nu)} \int_{\O \times \O} |w_1 - w_2|^2 \mbox{d}\gamma(w_1, w_2) ,
\end{equation*}
where $\Pi(\mu, \nu)$ is the set of measures with marginals $\mu$ and $\nu$, see \cite{Villani-optimal}.

\section{Inherited Regularity from the Kernel}\label{sec:inherited_reg_from_the_kernel}

Our  assumptions on the regularity of the kernel naturally translate into regularity of the averaging and the strength-function. This will be reflected in the following two inheritance lemmas.

\begin{lemma} (Inherited regularity of the velocity averaging)
  \label{prop:regularity_velocity_averaging}
  Suppose that the kernel $\Phi_\rho$ satisfies the uniform regularity assumptions \eqref{reg1_kernel} and \eqref{reg2_kernel}. 
  Let $\rho, \rho', \rho''  \in \cP$.  
  Then for all $k \geq 0$  we have
  \begin{align} 
    \label{reg1_uavg}
    \tag{$\u$Reg1} &\| \uavg\|_{C^k}  \leq C \|\u\|_{L^1(\rho)}, \\
    \label{reg2_uavg}
    \tag{$\u$Reg2} & \|\uavgp - \uavgpp\|_{C^k}  \leq C \|\u'\|_{L^1(\rho')} W_\BL(\rho', \rho'') +  C W_\BL(\u' \rho', \u'' \rho'').
  \end{align}
\end{lemma}

\begin{proof}
  For \eqref{reg1_uavg}, place all of the derivatives on the kernel and use \eqref{reg1_kernel}.
  For \eqref{reg2_uavg}, we write 
  \begin{align*}
    \uavgp - \uavgpp 
      &= \int_{\T^n} \Phi_{\rho'}(x,y) \u'(y) \rho'(y) \dy - \int_{\T^n} \Phi_{\rho''}(x,y) \u''(y) \rho''(y) \dy \\
      &= \int_{\T^n} (\Phi_{\rho'}(x,y) - \Phi_{\rho''}(x,y)) (\u' \rho')(y) \dy  \\
      &\quad+  \int_{\T^n} \Phi_{\rho''}(x,y) ( (\u'\rho')(y) - (\u''\rho'')(y)) \dy . 
  \end{align*}
  Once again, placing the derivatives on the kernel and using \eqref{reg1_kernel} and \eqref{reg2_kernel} we arrive at \eqref{reg2_uavg}. 
\end{proof}

The strength subsequently inherits regularity from the velocity averaging. 
Let us denote 
  \[
    J := \sup_{t\in [0,T]} \| \u \|_{L^1(\rho)}.
  \]
 We should remark that  $J$ is controlled by, for example the energy, or the maximum of $\u$, which holds a priori for the microscopic and kinetic $\st$-model, \eqref{SM_microscopic} and \eqref{SM_kinetic}, 
  due to the maximum principle on the velocity.  
  The maximum principle also holds for the mesoscopic $\st$-model with the strong local alignment force for the monokinetic limiting regime, \eqref{SM_kinetic_monokinetic}
  (the justification is provided in Section \ref{sec:monokinetic_regime}). 
  However, for the kinetic $\st$-model with strong Fokker-Planck penalization force, \eqref{SM_kinetic_maxwell_la}, there is no control on $J$ except when $\st$ is written in the $\wt$-form \eqref{e:sw}.
 This is part of the reason why the Maxwellian limit is proved only for the $\wt$-model.

\begin{lemma} (Inherited regularity of the strength)
    \label{prop:regularity_strength}
    Suppose that the kernel $\Phi_\rho$ satisfies the uniform regularity assumptions \eqref{reg1_kernel} and \eqref{reg2_kernel}.
  Let $\st$ satisfy the continuity equation
  \[
   \partial_t \st + \nabla_x \cdot (\st \uavg)  = 0.
   \]
   Then for all $k \geq 0$, we have
   \begin{equation}\label{}
 \label{reg1_st}
      \tag{$\st$Reg1}  \sup_{t \in [0,T]}  \|\st\|_{C^k} \leq C(k,J).
\end{equation}
If two strength-functions $\st', \st'' \in C([0,T]; C^{k+1}(\T^n))$  solve their respective continuity equations, then for all $k \geq 0$:
  \begin{equation} \label{reg2_st}
   \tag{$\st$Reg2}
  \begin{split}
       \sup_{t \in [0,T]} \|\st' - \st''\|_{C^k}  &\leq C \|\st_0' - \st_0''\|_{C^k}  \\
      & \quad +  C \int_0^T [ W_\BL(\rho', \rho'') +  W_\BL(\u' \rho', \u'' \rho'') ] \dt,
      \end{split}
\end{equation}
    where $C := C(k, J',J'')$. 
    \begin{proof}
    Let us apply $k$ derivatives of the continuity equation and evaluate at the maximum. We obtain, by \eqref{reg1_uavg},
    \[
    \ddt  \|\st\|_{C^k} \leq \|\uavg\|_{C^{k+1}} \|\st\|_{C^k}  \lesssim \|\u\|_{L^1(\rho)} \|\st\|_{C^k} .
    \]
    Hence, \eqref{reg1_st} follows.
    
    To prove \eqref{reg2_st}, apply the $k^{th}$  derivative for the difference of equations, and use \eqref{reg1_uavg}-\eqref{reg2_uavg} to get 
\begin{equation*}\label{}
\begin{split}
            \partial_t \|\st' - \st''\|_{C^k} & \lesssim  \|\uavgp - \uavgpp\|_{C^{k+1}} \|\st'\|_{C^{k+1}} + \|\uavgpp\|_{C^{k+1}} \|\st' - \st''\|_{C^k} \\
            & \leq C(J') (W_\BL(\rho', \rho'') +   W_\BL(\u' \rho', \u'' \rho''))+ \|\u''\|_{L^1(\rho'')} \|\st' - \st''\|_{C^k} . 
\end{split}
\end{equation*}
    The bound \eqref{reg2_st} follows from the Gr\"onwall Inequality.
    \end{proof}
  \end{lemma}


\section{Well-posedness of the agent based model}
\label{sec:well-posedness_of_sm_microscopic}

The well-posedness of the discrete-continuous system stated in \thm{t:gwpmicro} must be established before addressing the mean field passage to the kinetic system.  We note that the choice of $k > n/2 + 2$ guarantees that $\|\partial^2 \st\|_{\infty} \lesssim \|\st\|_{H^k}$ for an arbitrary second order partial derivative $\partial^2$ by the Sobolev Embedding Theorem. 

Let us recall that the kernel satisfies the standing regularity assumptions \eqref{reg1_kernel} and \eqref{reg2_kernel}. Thus, from Lemma \ref{prop:regularity_velocity_averaging}, $\uavgN$ inherits the regularity of the kernel and, in particular, satisfies \eqref{reg1_uavg}. 
For the remainder of this section, we will use $C$ to denote a constant depending on $k$ only and may change line by line. 

Consider the following viscous regularization of \eqref{SM_microscopic}:
\begin{equation}
  \label{SM_microscopic_viscous}
  \begin{cases}
      \dot{x_i} = v_i \\
      \dot{v_i} = \lambda \st(x_i) (\uavgN(x_i) - v_i ) \\
      \partial_t \st + \nabla_x \cdot (\st \uavgN) = \epsilon \Delta \st .
  \end{cases}
\end{equation}
For the moment, we avoid writing the explicit dependence of $x_i, v_i, \st$ on $\epsilon$ for the sake of brevity.  They are not to be confused with solutions 
to the unregularized system \eqref{SM_microscopic}.  
Let $X = C([0,T]; \R^{2nN} \times H^k(\T^n))$ and define $Z(t) := (\{x_i(t)\}_{i=1}^N, \{v_i(t)\}_{i=1}^N, \st(t))$. 
The norm $\|\cdot\|_X$ is given by:
\begin{equation*}
  \|Z\|_X = \sup_{t \in [0, T]} \max_{i = 1 \dots N} \|x_i(t)\| + \|v_i(t)\| + \| \st(t,\cdot) \|_{H^k(\R^n)}.
\end{equation*}
Define the map $\cF$ by 
\begin{align}
  \label{eqn:Duhamel_formula}
  \cF( Z(t) ) =   
    \begin{bmatrix}
      1 \\
      1 \\
      e^{\epsilon t \Delta}
  \end{bmatrix} \odot Z_0 
  + \int_0^t  
   \begin{bmatrix}
      1 \\
      1 \\
      e^{\epsilon (t-\tau) \Delta}
  \end{bmatrix} \odot 
  A(\tau) d\tau,
\end{align}
where $A$ represents all of the non-laplacian terms.  Existence and uniqueness of solutions to \eqref{SM_microscopic_viscous} amounts to showing 
that $\cF : B_1(Z_0) \mapsto B_1(Z_0)$ and that it is a contraction mapping, where $B_1(Z_0)$ is the ball of radius $1$ centered at $Z_0$ in $X$.
Contractivity will follow similarly from invariance. 
For invariance, we aim to show that: 
\begin{equation*}
\begin{split}
  \|\cF ( Z(t) ) - Z_0 \|_X &\leq 
      \Big\|    
      \begin{bmatrix}
        1 \\
        1 \\
        e^{\epsilon t \Delta}
    \end{bmatrix}  \odot Z_0 - Z_0 \Big\|_X 
    + \Big\| 
          \int_0^t 
          \begin{bmatrix}
            1 \\
            1 \\
            e^{\epsilon (t-\tau) \Delta}
        \end{bmatrix}  \odot 
          A( Z(\tau) ) d\tau \Big\|_X \\
          &\leq 1 .
 \end{split}
\end{equation*}
The first term is small for small $T$ due to the continuity of the heat semigroup.  
For second term, we will treat each component individually. 
For the $x_i$-component, 
$$\|x_i(t) - x_i(0)\| \leq T \max_i \|v_i(0)\| .$$
For the $v_i$-component, we have 
\begin{align*}
  \|v_i(t) - v_i(0) \| \leq T \|\st\|_{\infty} (\|\uavgN\|_{\infty} + \max_i \|v_i(0)\|) \leq C T (\|Z_0\| + 1) .
\end{align*}
For the $\st$-component, we will use the analyticity property of the heat semigroup,
\begin{align*}
  \|\nabla e^{\epsilon t \Delta} f\|_{L^2} \leq \frac{1}{\sqrt{\epsilon t}} \|f\|_{L^2} ,
\end{align*}
along with the product estimate to get 
\begin{align*}
  &\Big\| \int_0^t e^{\epsilon (t-\tau) \Delta} \nabla_x \cdot (\st \uavgN)  d\tau \Big\|_{H^k} \\
    &\leq \frac{2 T^{1/2}}{\epsilon^{1/2}} \sup_{t \in [0,T]} \|\st \uavgN \|_{H^k} \\
    &\leq \frac{2 T^{1/2}}{\epsilon^{1/2}} \sup_{t \in [0,T]} ( \|\st \|_{\infty} \| \uavgN \|_{H^k} + \|\st \|_{H^k} \| \uavgN \|_{\infty}) \\
    &\leq \frac{2 T^{1/2}}{\epsilon^{1/2}} (\|Z_0\|_X + 1) . 
\end{align*}
For small enough $T$, we have invariance.  Contractivity of $\cF$ follows from similar estimates. 
This time interval of existence $T$ could depend on $\epsilon$. 
To establish that the there is a common time interval of existence independent of $\epsilon$, we establish an $\epsilon$-independent energy estimate.
For the $x_i$ and $v_i$ components, the estimates follow easily from the invariance estimates.  We record them here. 
$$\|x_i(t)\| \leq \|x_i(0)\| + t \max_i \|v_i(0)\|,$$  
and
\begin{align*}
  \|v_i(t)\| \leq \|v_i(0)\| + t \|\st\|_{\infty} (\|\uavgN\|_{\infty} + \max_i \|v_i(0)\|) \leq C( 1 + t \|\st\|_{H^k} ).
\end{align*}
For the strength, we multiply \eqref{SM_microscopic_viscous} by $\partial^{2j} \st$ and integrate by parts. We obtain for any $0 \leq j \leq k$, 
\begin{equation*}
\begin{split}
  \label{eqn:st_Hj_norm}
  \frac{d}{dt} \|\st\|_{\dot{H}^j} 
    &= \int \nabla_x \cdot \uavgN |\partial^j \st|^2 \dx \\
    & \quad - \int (\partial^j (\uavgN \cdot \nabla_x \st) - \uavgN \cdot \nabla_x \partial^j \st) \partial^j \st \dx \\
    & \quad - \int \partial^j ((\nabla_x \cdot \uavgN) \st ) \partial^j \st \dx - \epsilon \int |\partial^j \nabla \st|^2 \dx . 
    \end{split}
\end{equation*}
Dropping the $\epsilon$ term coming from the Laplacian, we obtain
\begin{align*}
  \ddt \|\st\|_{\dot{H}^j}^2 \leq C \big( \|\st\|_{\dot{H}^j}^2 + \|\st\|_{\dot{H}^j} \|\nabla \st\|_{\infty} + \|\st\|_{\dot{H}^j} \|\st\|_{\infty} \big), \hspace{5mm} \text{for all } 0 \leq j \leq k.
\end{align*}
Since $k > n/2 + 2$, 
\begin{align}
  \label{ineq:energy_estimate_strength}
  \ddt \|\st\|_{H^k}^2 \leq C \|\st\|_{H^k}^2. 
\end{align}
We will now denote the explicit dependencies on $\epsilon$ and take $\epsilon \to 0$. 
From Gr{\"o}nwall, we conclude that $Z^{\epsilon}(t)$ exists on a common time interval independent of $\epsilon$. 
Writing the equation for $(\ddt \steps)^2$, we have 
\begin{equation*}
  \big\| \ddt \steps \big\|_{L^2}^2 \leq C \|\steps\|_{H^1} + \epsilon \|\steps\|_{H^2} \leq C \|\steps\|_{H^k}. 
\end{equation*}
By local well-posedness of \eqref{SM_microscopic_viscous},  $Z^{\epsilon} \in C([0,T]; R^{2nN} \times H^k(\T^n))$ and therefore $\frac{d}{dt} Z^{\epsilon} \in L^2([0,T]; \R^{2nN} \times L^2(\T^n))$.  
By the Aubin-Lions lemma, we obtain a subsequence, which we denote again by $Z^{\epsilon}$ such that $Z^{\epsilon} \to Z^0$ in $C([0,T]; R^{2nN} \times H^{k-1}(\T^n))$. 
Since $H^{k-1}(\T^n)$ is dense in $H^k(\T^n)$, we have $Z^0 \in C_w([0,T]; \R^{2nN} \times H^k(\T^n))$.  Finally, since $k > n/2 + 2$, the terms $A^{\epsilon}$ converge 
pointwise to $A$. Taking $\epsilon \to 0$ in the Duhamel formula \eqref{eqn:Duhamel_formula}, we get 
\begin{align*}
  Z^0(t) = Z^0_0 + \int_0^t A(\tau) d\tau .
\end{align*}
That is, $Z^0 \in C_w([0,T]; \R^{2nN} \times H^k(\T^n))$ solves \eqref{SM_microscopic}. 
Finally, we note that due to the $\epsilon$-independent energy estimate \eqref{ineq:energy_estimate_strength}, $\|Z^0\|_X$ remains bounded for any finite time and thus exists for all time. 
This concludes existence and uniqueness of solutions to \eqref{SM_microscopic} on the global time interval $[0, \infty)$.


\section{Mean Field Limit}
\label{sec:mean_field_limit} The purpose of this section is to  establish the passage  from the discrete system \eqref{SM_microscopic} to the kinetic system \eqref{SM_kinetic} and, in particular, to prove Theorem \ref{thm:mean_field_limit}.  We will skip some details of the argument that appeared already for the Cucker-Smale model, as written in \cite{S-book} and focus mostly on the new ingredient pertaining to the adaptive strength.

First, let us define a weak version of \eqref{SM_kinetic}. 
\begin{definition}
  \label{defn_weak_soln}
  Fix a time $T > 0$ and an integer $k \geq 0$. We say the pair $(\mu, \st)$ with 
  $\mu \in C_{w^*}([0,T]; \cP(\T^n \times \R^n))$ and $\st \in C([0,T]; C^k(\T^n))$ is a \textit{weak} solution to \eqref{SM_kinetic} 
  if for all $g \in C_0^{\infty}([0,T] \times \T^n \times \R^n$) and for all $0 < t < T$, 
  \begin{equation}
      \label{SM_kinetic_weak}
      \begin{cases}
          \int_{\T^n \times \R^n} g(t,x,v) d\mu_t(x,v) &= \int_{\T^n \times \R^n} g(0,x,v) d\mu_0(x,v) \\
          & \quad + \int_0^t \int_{\T^n \times \R^n} (\partial_{\tau} g + v \cdot \nabla_x g \\
          & \quad + \hspace{1mm}\st (\uavg - v) \cdot \nabla_v g) d\mu_{\tau}(x,v) \\
          \partial_t \st + \nabla_x \cdot (\st \uavg) = 0. 
      \end{cases}
  \end{equation}
  In other words, $\mu$ solves the first equation weakly and the strength, $\st$, solves the second equation strongly. 
\end{definition}
As in the Cucker-Smale case, the empirical measure 
\begin{equation}\label{defn:sm_empirical_measures}
 \mu^N_t = \sum_{i=1}^N m_i \delta_{x_i(t)} \otimes \delta_{v_i(t)} 
\end{equation}
is a solution to \eqref{SM_kinetic_weak} if and only if $(x_i(t), v_i(t), \st(t,x))$ solve the discrete system \eqref{SM_microscopic}.
The well-posedness of \eqref{SM_kinetic_weak} for empirical measure-valued solutions is therefore equivalent to the well-posedness of the discrete-continuous system \eqref{SM_microscopic} established in Section \ref{sec:well-posedness_of_sm_microscopic}.
To show existence of general weak solutions to \eqref{SM_kinetic_weak}, we will show that the weak solution arises as a weak limit of \eqref{defn:sm_empirical_measures}.

First we note by a similar argument to the Cucker-Smale case, that $\mu_t$ is the push-forward of $\mu_0$ along the characteristic flow: 
\begin{subequations}\label{eqn:sm_characteristic_eqns}
\begin{align}
      \ddt X(t,s,x,v) &= V(t,s,x,v), & X(s,s,x,v) &= x \\
      \ddt V(t,s,x,v) & = \st(X) (\uavg(X) - V), & V(s,s,x,v) &= v.
\end{align} 
\end{subequations}

Thus,
\begin{align}
  \label{eqn:conservation_law}
  \int_{\T^n \times \R^n} h(X(t,\o), V(t,\o)) \dmu_0(\o) = \int_{\T^n \times \R^n} h(\o) \dmu_t(\o), \qquad \o = (x,v)
\end{align}

Owing to the maximum principle for the velocity characteristics, we have a non-expansion of the support of the measure-solutions in $v$: if $\supp{\mu_0} \subset \T^n \times B_R$, then $\supp{\mu_t} \subset \T^n \times B_R$ for all $t \in [0,T)$.   One immediate consequence of this is that $J \leq \|u\|_{L^\infty(\rho)} \leq R$ uniformly in time. Hence, the regularity estimates of Lemmas \ref{prop:regularity_velocity_averaging} and \ref{prop:regularity_strength} are available. 

The goal will be to establish a stability estimate:  for any two weak solutions $(\mu', \st')$ and $(\mu'', \st'')$ in the sense of Definition \ref{defn_weak_soln}, there exists a constant $C(R,T)$ such that 
\begin{align}
    \label{stability_Wasserstein}
    &W_1 (\mu_t', \mu_t'') \leq C_1(R,T) W_1(\mu_0', \mu_0'') .
\end{align}
For then, a Cauchy sequence $\mu_0^N$ with $W_1(\mu_0^N, \mu_0) \to 0$ yields convergence of $\mu_t^N$ to some $\mu \in C_{w^*}([0,T]; \cP(\T^n \times B_R))$.  
We will also establish stability with respect to the strength, \eqref{reg2_st'}, which will yield the convergence of $\st^N$ to some $\st \in C([0,T]; C^k(\T^n))$. 
Finally, we will verify that $(\mu, \st)$ is a weak solution to \eqref{SM_kinetic_weak} in Lemma \ref{lmma:limit_is_a_soln}.

\begin{lemma} (Deformation Tensor Estimates)
  \label{deformation_estimates} 
  Let $(\mu, \st)$ be a weak solution to \eqref{SM_kinetic_weak} on $[0,T]$ with characteristics $X,V$ given in \eqref{eqn:sm_characteristic_eqns}.  
  Then
  \begin{align*}
    \| \nabla X \|_{\infty} + \| \nabla V \|_{\infty} \leq C(R,T).
  \end{align*}
  \begin{proof}
    Differentiating \eqref{eqn:sm_characteristic_eqns},
   \begin{equation*}\label{}
\begin{split}
 \ddt \nabla X &= \nabla V \\ 
        \ddt \nabla V &= \nabla X^T \nabla \st(X) (\uavg(X) - V) + \st(X) \nabla X^T \nabla \uavg(X) - \st(X) \nabla V.
\end{split}
\end{equation*}    
    
    By the maximum principle on the velocity and the inherited regularity \eqref{reg1_uavg}, \eqref{reg2_uavg}, and \eqref{reg1_st}
    \begin{align*}
      \ddt (\|\nabla V\|_{\infty} + \|\nabla X\|_{\infty}) \leq C' ( \|\nabla X\|_{\infty} + \|\nabla V\|_{\infty} ) .
    \end{align*}
    We conclude by Gr{\"o}nwall. 
  \end{proof}
\end{lemma}

\begin{lemma} (Continuity Estimates)
  \label{characteristic_estimates}
  Let $(\mu', \st'), (\mu'', \st'')$ be weak solutions to \eqref{SM_kinetic_weak} on $[0,T]$;
  and let $X', X'', V', V''$ be the corresponding characteristics given by \eqref{eqn:sm_characteristic_eqns}. Then  
  \begin{align*}
    \|X' - X''\|_{\infty} + \|V' - V''\|_{\infty} \leq C( R, T) W_1(\mu_0', \mu_0'').
  \end{align*}
  \begin{proof}
    We have 
\begin{equation*}\label{}
\begin{split}
            \ddt (X' - V') &= V' - V'' \\
            \ddt (V' - V'') &= \st'(X') (\uavgp(X') - V') - \st''(X'') (\uavgpp(X'') - V'')  .
\end{split}
\end{equation*}
    By the maximum principle on the velocity and inherited regularity conditions \eqref{reg1_uavg}, \eqref{reg2_uavg}, \eqref{reg1_st}, \eqref{reg2_st}, 
    \begin{align*}
      \ddt \|V' - V''\|_{\infty} 
        &\leq \|\st'(X') - \st'(X'')\|_{\infty} \| \uavgp(X') - V' \|_{\infty} \\
        &\quad +\|\st'(X'') - \st''(X'')\|_{\infty} \| \uavgp(X') - V' \|_{\infty} \\
          &\quad + \|\st''(X'')\|_{\infty} \| \| \uavgpp(X') - V' - \uavgpp(X'') + V'' \|_{\infty} \\
        &\leq C \Big( \|V' - V''\|_{\infty} + \sup_{t \in [0,T]} (W_1(\rho', \rho'') + W_{\BL}(\u'\rho',  \u''\rho''))(t) \Big).
    \end{align*}
    Combining with $\ddt \|X' - X''\|_{\infty} \leq \|V' - V''\|_{\infty}$, we get
    \begin{equation}
        \label{charac_estimate_intermediate}
        \begin{split}
        \ddt \big( \|X' &- X''\|_{\infty} + \|V' - V''\|_{\infty} \big) \\
            &\leq C \Big( \|X' - X''\|_{\infty} + \|V' - V''\|_{\infty}  \\
            &\quad + \sup_{t \in [0,T]} (W_1(\rho', \rho'') + W_{\BL}(\u'\rho',  \u''\rho''))(t) \Big) . 
            \end{split}
    \end{equation}
    To estimate $W_1(\rho',\rho'')$ and $W_{\BL}(\u' \rho', \u'' \rho'')$, we use the fact that $\mu_t$ is the push-forward of $\mu_0$, \eqref{eqn:conservation_law}. Fix $\|g\|_{Lip} \leq 1$, then
    \begin{align*}
        \int_{\T^n} g(x) (d\rho_t' - d\rho_t'') 
            &= \int_{\T^n \times \R^n} g(x) (d\mu_t' - d\mu_t'') \\
            &= \int_{\T^n \times \R^n} g(X') d\mu_0' - \int_{\T^n \times \R^n} g(X'') d\mu_0''  \\
            &= \int_{\T^n \times \R^n} (g(X') - g(X'')) d\mu_0' + \int_{\T^n \times \R^n} g(X'') (d\mu_0' - d\mu_0'') \\
            &\leq  \|X' - X''\|_{\infty} + \|\nabla X''\|_{\infty} W_1(\mu_0', \mu_0'').
    \end{align*}
    For $W_{\BL}(\u' \rho', \u'' \rho'')$, fix $\|g\|_{\infty}, \|g\|_{Lip} \leq 1$, we have
    \begin{align*}
        & \int_{\T^n} g(x) (d(\u_t '\rho_t') - d(\u_t'' \rho_t'')) \\
            &= \int_{\T^n \times \R^n} vg(x) (d\mu_t' - d\mu_t'') \\
            &= \int_{\T^n \times \R^n} V'g(X') d\mu_0' - \int_{\T^n \times \R^n} V''g(X'') d\mu_0''  \\
            &= \int_{\T^n \times \R^n} (V'g(X') - V''g(X'')) d\mu_0' + \int_{\T^n \times \R^n} V''g(X'') (d\mu_0' - d\mu_0'') \\
            &\leq  \|g\|_{\infty} \|V' - V''\|_{\infty} + M \|X' - X''\|_{\infty} R \\
            &\quad + \big( \|g\|_{\infty} \|\nabla V''\|_{\infty} + R \|\nabla X''\|_{\infty} \big) W_1(\mu_0', \mu_0'').
    \end{align*}  
    These estimates hold uniformly in time. Plugging these into \eqref{charac_estimate_intermediate} and using Lemma \ref{deformation_estimates}, we conclude by Gr{\"o}nwall. 
  \end{proof}
\end{lemma}
An immediate Corollary is that the regularity conditions \eqref{reg2_uavg} and \eqref{reg2_st} can be restated in terms of the 
distance $W_1(\mu_0',\mu_0'')$. 
\begin{corollary}
  \label{corollary_continuity_estimates}
  Let $(\mu', \st'), (\mu'', \st'')$ be weak solutions to \eqref{SM_kinetic_weak} on $[0,T]$.
  Then
  \begin{align*}
    \sup_{t \in [0,T]} \big( W_1(\rho', \rho'') + W_{\BL}(\u'\rho', \u''\rho'') \big) \leq C_1(R,T) W_1(\mu_0', \mu_0''),
  \end{align*}
  and 
  \begin{align*}
    \label{reg2_uavg'}
    \tag{$\u$Lip2} &\sup_{t \in [0,T]} \|\uavgp - \uavgpp\|_{C^k} \leq C_2(k,R,T) W_1(\mu_0', \mu_0'') , \\
    \label{reg2_st'}
    \tag{$\st$Lip2} &\sup_{t \in [0,T]}  \|\st' - \st''\|_{C^k} \leq C_3(k,R,T) \Big( \|\st_0' - \st_0''\|_{C^k}  + W_1(\mu_0', \mu_0'') \Big) . 
  \end{align*}
\end{corollary}

Letting $X' := X'(t,\o)$, $V' := V'(t,\o)$ and similarly for $X''$, $V''$, we have for any $h \in Lip(\T^n \times \R^n)$ with $Lip(h) \leq 1$, 
\begin{align*}
  &\int_{\T^n \times \R^n} h(\o) (d\mu_t' - d\mu_t'') \\
      &= \int_{\T^n \times \R^n} h(X', V') d\mu_0'  - \int_{\T^n \times \R^n} h(X'', V'') d\mu_0'' \nonumber \\
      &= \int_{\T^n \times \R^n} h(X', V') (d\mu_0' - d\mu_0'') +\int_{\T^n \times \R^n} (h(X', V') - h(X'', V'')) d\mu_0'' \nonumber \\
      &\leq (\|\nabla X'\|_{\infty} + \|\nabla V'\|_{\infty}) W_1(\mu_0', \mu_0'')  \nonumber
        + \|X' - X''\|_{\infty} + \|V' - V''\|_{\infty} .
\end{align*}
Therefore, 
\begin{equation}
  \label{W1_estimate1}
  \begin{split}
  	W_1(\mu_t', \mu_t'') 
    	&\leq (\|\nabla X'\|_{\infty} + \|\nabla V'\|_{\infty}) W_1(\mu_0', \mu_0'') \\  
    	&\quad + \|X' - X''\|_{\infty} + \|V' - V''\|_{\infty} . 
    \end{split}
\end{equation}

Lemmas \ref{deformation_estimates}, \ref{characteristic_estimates} and inequality \eqref{W1_estimate1} imply the desired Wasserstein-1 stability \eqref{stability_Wasserstein}.
We conclude that the empirical measures $\mu_t^N$ converge in the Wasserstein-1 metric to some $\mu \in C_{w^*}([0,T]; \cP(\T^n \times B_R))$ uniformly on $[0,T]$
(the weak$^*$ continuity of $\mu$ owes to the weak$^*$ continuity of the empirical measures and the uniform convergence on $[0,T]$).
In addition, by Corollary \ref{corollary_continuity_estimates}, for any $k\geq 0$, $\st_t^N$ converges in $C^k$ to some $\st_t \in C^{k}(\T^n)$ uniformly on $[0,T]$. 
It remains to show that the limiting pair $(\mu, \st)$ is in fact a weak solution to \eqref{SM_kinetic} in the sense of Definition \ref{defn_weak_soln}. 

\begin{lemma}
    \label{lmma:limit_is_a_soln}
    Suppose a sequence $\mu^N \in C_{w^*}([0,T]; \cP(\T^n \times B_R))$ converges weakly pointwise, i.e. $\mu_t^N \to \mu_t$  for all $t \in [0,T]$; and suppose that 
    for any $k\geq 0$, $\st_t^N$ converges in $C^k(\T^n)$ to $\st_t$, i.e. $\|\st_t^N - \st_t \|_{C^k} \to 0$, uniformly for all $t \in [0,T]$. 
    Then $(\mu, \st) \in C_{w^*}([0,T]; \cP(\T^n \times B_R)) \times C([0,T], C^{k}(\T^n))$ is a weak solution to \eqref{SM_kinetic}. 
\end{lemma}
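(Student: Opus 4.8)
The plan is to pass to the limit $N\to\infty$ in the weak formulation \eqref{SM_kinetic_weak}, which each pair $(\mu^N,\st^N)$ satisfies, working term by term for a fixed test function $g\in C_0^\infty([0,T]\times\T^n\times\T^n)$ and fixed $t\in(0,T)$. Two standing facts will be used repeatedly: all the measures $\mu^N_\tau$ live on the common compact set $\T^n\times B_R$ with total mass $M$, and since the total momentum $\int v\,d\mu^N_\tau$ is conserved (plug $g=v$ into \eqref{SM_kinetic_weak}) we have $J\le MR<\infty$ for every $N$; consequently Propositions~\ref{prop:regularity_velocity_averaging} and \ref{prop:regularity_strength} and Corollary~\ref{corollary_continuity_estimates} apply to the $(\mu^N,\st^N)$ with constants that do not depend on $N$.

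First I would dispatch the linear terms. The boundary terms $\int g(t)\,d\mu^N_t$ and $\int g(0)\,d\mu^N_0$ converge to their analogues against $\mu_t$ and $\mu_0$ directly from the weak-$*$ pointwise convergence $\mu^N_\tau\to\mu_\tau$. For the transport contribution $\int_0^t\!\int(\partial_\tau g+v\cdot\nabla_x g)\,d\mu^N_\tau\,d\tau$, the inner integral converges for each $\tau$ (the integrand is a fixed bounded continuous function on $\T^n\times B_R$) and is bounded uniformly in $\tau$ and $N$ by $M\|\partial_\tau g+v\cdot\nabla_x g\|_{L^\infty}$, so dominated convergence in $\tau$ gives convergence of the time integral.

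The nonlinear term $\int_0^t\!\int\st^N(\uavgN-v)\cdot\nabla_v g\,d\mu^N_\tau\,d\tau$ is the main obstacle and the only place where the hypotheses genuinely enter. The key sub-step is to upgrade the weak convergence $\mu^N_\tau\to\mu_\tau$ into uniform convergence $\uavgN\to\uavg$. Since the supports sit in the fixed compact set $\T^n\times B_R$, $W_1$ metrizes weak convergence there, so $W_1(\mu^N_0,\mu_0)\to0$; Corollary~\ref{corollary_continuity_estimates} then gives $\sup_\tau\big(W_1(\rho^N_\tau,\rho_\tau)+W_1(\u^N\rho^N_\tau,\u\rho_\tau)\big)\to0$, and feeding this into \eqref{reg2_uavg} yields $\sup_\tau\|\partial^k(\uavgN-\uavg)(\tau,\cdot)\|_\infty\to0$ for every $k\ge0$. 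Together with the hypothesis $\sup_\tau\|\partial^k(\st^N_\tau-\st_\tau)\|_\infty\to0$, the integrand $h_N:=\st^N(\uavgN-v)\cdot\nabla_v g$ converges uniformly on $[0,t]\times\T^n\times B_R$ to $h:=\st(\uavg-v)\cdot\nabla_v g$. For each $\tau$ we split
\begin{align*}
  \int h_N\,d\mu^N_\tau-\int h\,d\mu_\tau=\int(h_N-h)\,d\mu^N_\tau+\Big(\int h\,d\mu^N_\tau-\int h\,d\mu_\tau\Big),
\end{align*}
where the first term is bounded by $M\|h_N-h\|_{L^\infty}\to0$ uniformly in $\tau$ and the second tends to $0$ by weak convergence for each fixed $\tau$; uniform boundedness then lets dominated convergence close the time integral. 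Hence $\mu$ solves the first equation of \eqref{SM_kinetic} weakly.

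Finally, for the strength equation: each $\st^N$ solves $\partial_t\st^N+\nabla_x\cdot(\st^N\uavgN)=0$ classically, and we have just shown $\st^N\to\st$ and $\uavgN\to\uavg$ in $C([0,T];C^k(\T^n))$ for every $k$; therefore $\st^N\uavgN\to\st\uavg$ in $C([0,T];C^k)$ and $\partial_t\st^N=-\nabla_x\cdot(\st^N\uavgN)\to-\nabla_x\cdot(\st\uavg)$ in $C([0,T];C^{k-1})$. Since $\st^N\to\st$ and $\partial_t\st^N$ both converge uniformly in $(t,x)$, the limit satisfies $\partial_t\st+\nabla_x\cdot(\st\uavg)=0$ classically, and $\st\in C([0,T];C^k(\T^n))$ as a uniform limit of such functions; $\mu\in C_{w^*}([0,T];\cP(\T^n\times B_R))$ is inherited from the $\mu^N$. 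Thus $(\mu,\st)$ is a weak solution to \eqref{SM_kinetic} in the sense of Definition~\ref{defn_weak_soln}. I expect the Wasserstein-from-weak-convergence step feeding \eqref{reg2_uavg}, together with the bookkeeping of the ``uniform-times-weak'' product in the time integral, to be the only non-routine part; everything else is a soft limiting argument resting on compactness of the supports.
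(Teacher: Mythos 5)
Your proposal is correct, and for the measure equation it follows the paper's proof essentially verbatim: the same split of the nonlinear term into a piece controlled by $\|\st^N(\uavgN-v)-\st(\uavg-v)\|_\infty$ (estimated via \eqref{reg1_uavg}, \eqref{reg1_st} and the $C^0$ convergence of $\st^N$ and $\uavgN$) and a piece that vanishes by weak convergence against the fixed continuous integrand. Where you genuinely diverge is the strength equation. The paper integrates the transport equation along the characteristic flows $\tilde{X}^N$ of $\uavgN$, proves the flows converge (estimates \eqref{uavg_characteristic_estimate1}--\eqref{uavg_characteristic_estimate2}), and passes to the limit in the explicit exponential representation of $\st^N$ along characteristics. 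You instead invoke the elementary fact that if $\st^N\to\st$ uniformly and $\partial_t\st^N=-\nabla_x\cdot(\st^N\uavgN)\to-\nabla_x\cdot(\st\uavg)$ uniformly, then the limit solves the equation classically. Your route is shorter and avoids introducing the auxiliary flows altogether; the paper's characteristics argument has the side benefit of exhibiting the solution formula and the positivity/structure of $\st$, but neither is needed for the lemma as stated. One point to tighten: you invoke Corollary \ref{corollary_continuity_estimates} for the pair $(\mu^N,\mu)$, but that corollary is stated for two \emph{weak solutions}, and $\mu$ is not yet known to be one — this is exactly what you are proving. The repair is immediate and does not change the argument: the hypothesis of uniform weak convergence on the common compact set $\T^n\times B_R$ already gives $\sup_\tau W_1(\mu^N_\tau,\mu_\tau)\to0$, hence $\sup_\tau\big(W_1(\rho^N_\tau,\rho_\tau)+W_1(\u^N\rho^N_\tau,\u\rho_\tau)\big)\to0$ by testing against Lipschitz functions of $x$ and against $g(x)v$ (Lipschitz on the support), and this feeds directly into \eqref{reg2_uavg} of Proposition \ref{prop:regularity_velocity_averaging}, which requires only the kernel assumptions and $J<\infty$, not that the limit be a solution.
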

\begin{proof}
    We have already observed that, due to Corollary \eqref{corollary_continuity_estimates}, for any $k$, $\st_t^N$ converges in $C^k$ to some $\st_t \in C^{k}(\T^n)$ 
    uniformly on $[0,T]$. In addition $\partial_t \st^N = -\nabla \cdot (\st^N \uavgN)$ is uniformly bounded on $[0,T]$ due to \eqref{reg1_uavg} and \eqref{reg1_st}. 
    So, $\st \in C([0,T]; C^k(\T^n))$. 
    To show that $\st$ solves the transport equation $\partial_t \st + \nabla \cdot (\st \uavg) = 0$, let $\tilde{X}$ be the characteristic flow, 
    \begin{align*}
      \ddt \tilde{X}(t, \alpha) = \uavg(t, \tilde{X}(t,\alpha)), \hspace{5mm} \tilde{X}(0, \alpha) = \alpha .
    \end{align*}
    Similarly, let $\tilde{X}^N$ be the characteristic flow for the empirical strength $\st_t^N$ which solves $\partial_t \st^N + \nabla \cdot (\st^N \uavgN) = 0$,
    \begin{align*}
      \ddt \tilde{X}^N(t, \alpha) = \uavgN(t, \tilde{X}^N(t,\alpha)), \hspace{5mm} \tilde{X}^N(0, \alpha) = \alpha .
    \end{align*}
    Abbreviating $\tilde{X}(t,\alpha), \tilde{X}^N(t,\alpha)$ by $\tilde{X}, \tilde{X}^N$ and using \eqref{reg1_uavg} and \eqref{reg2_uavg'}, we have for all $k \geq 0$
    \begin{align*}
      \|\partial^k &(\uavgN (\tau, \tilde{X}^N) - \uavg (\tau, \tilde{X})) \|_{\infty} \\
        &\leq \|\partial^k (\uavgN (\tau, \tilde{X}) - \uavg (\tau, \tilde{X}))\|_{\infty}  \nonumber
        + \|\partial^k (\uavgN (\tau, \tilde{X}^N) - \uavgN (\tau, \tilde{X})) \|_{\infty} \nonumber \\
        &\leq \|\partial^k (\uavgN (\tau, \tilde{X}) - \uavg (\tau, \tilde{X})) \|_{\infty}  \nonumber
        + \|\partial^{k+1} \uavgN\|_{\infty} \| \tilde{X}^N - \tilde{X} \|_{\infty} \nonumber \\
        &\leq C \big( W_1(\mu_0^N, \mu_0) + \| \tilde{X}^N - \tilde{X} \|_{\infty} \big). \nonumber
    \end{align*}
    As a result, 
    \begin{align*}
      \ddt \|(\tilde{X}^N - \tilde{X})(t,\cdot)\|_{\infty} \leq C \big( W_1(\mu_0', \mu_0'') + \| (\tilde{X}^N - \tilde{X}) (t,\cdot) \|_{\infty} \big), \hspace{5mm} \text{for all } t \in [0,T) .
    \end{align*}
    By Gr{\"o}nwall and $\tilde{X}(0,\alpha) = \tilde{X}^N(0,\alpha) = \alpha$, we obtain 
    \begin{align*}
      \|(\tilde{X}^N- \tilde{X})(t,\cdot)\|_{\infty} \leq C' W_1(\mu_0^N, \mu_0), \quad \text{for all } t \in [0,T) .
    \end{align*}
    Now solving along the characteristic $\tilde{X}^N$, 
    \begin{align*}
      \st^N (t, \tilde{X}^N(t, \alpha)) = \st_0^N(\alpha) \exp \Big\{ \int_0^t \nabla \cdot \uavgN (\tau, \tilde{X}^N(s, \alpha)) d\tau \Big\}.
    \end{align*}
    With these estimates in hand, we obtain uniform in time convergence to 
    \begin{align*}
      \st (t, \tilde{X}(t, \alpha)) = \st_0(\alpha) \exp \Big\{ \int_0^t \nabla \cdot \uavg (\tau, \tilde{X}(s, \alpha)) d\tau \Big\}.
    \end{align*}
    In particular, $\st$ is a solution to $\partial_t \st + \nabla(\st \uavg) = 0$. 
    Turning to the convergence of the empirical measures, the weak convergence $W_1(\mu_t^N, \mu_t) \to 0$ immediately implies that the linear terms in \eqref{SM_kinetic_weak} converge.  
    Let us address the nonlinear term.
    \begin{align*}
        \int_0^t &\int_{\T^n \times \R^{n}} \nabla_v g \st^N (\uavgN - v)) d\mu^N_{\tau}(x,v) - \int_0^t \int_{\T^n \times \R^{n}} \nabla_v g \st (\uavg - v)) d\mu_{\tau}(x,v) \\
            &\leq \|\nabla_v g\|_{\infty} \int_0^T \int_{\T^n \times \R^{n}}  \|\st^N (\uavgN - v)) - \st (\uavg - v))\|_{\infty} d\mu^N_{\tau}(x,v) \\
            &\quad + \|\nabla_v g\|_{\infty} \int_0^T \int_{\T^n \times \R^{n}}  \| \st (\uavg - v))\|_{\infty} d(\mu^N_{\tau}(x,v) - \mu_{\tau}(x,v)).
    \end{align*}
    The second term goes to zero by weak convergence.  For the first term, we simply use the regularity conditions \eqref{reg1_uavg}, \eqref{reg2_uavg'}, \eqref{reg1_st}, \eqref{reg2_st'} to get
    \begin{align*}
        &\|\st^N (\uavgN - v)) - \st (\uavg - v))\|_{\infty} \\
            &\leq \| \st^N - \st \|_{\infty} \|\uavgN\|_{\infty} + \| \st \|_{\infty} \|\uavgN - \uavg\|_{\infty} \\
            &\leq C W_1(\mu_0^N, \mu_0).
    \end{align*}
    This yields $\|\st^N (\uavgN - v)) - \st (\uavg - v))\|_{\infty} \to 0$ uniformly on $[0,T]$. 
\end{proof}


\section{Hydrodynamic Limits}
\label{sec:hydrodynamic_limits}
The goal of this section is to prove Theorems \ref{thm:monokinetic_limit} and \ref{thm:maxwellian_limit}, which establish a passage from the kinetic description \eqref{SM_kinetic} to the 
corresponding macroscopic description in the monokinetic and Maxwellian limiting regimes. 
The arguments presented here resemble the conventional case as done for general environmental averaging models in \cite{S-EA}. However, the analysis of the adaptive strength requires us to make substantial changes and additions throughout. We therefore present full details for completeness. 

From the introduction, the $v$-moments of the mesoscopic system \eqref{SM_kinetic} yields \eqref{SM_vmoments} with the Reynolds Stress tensor $\cR$ given by \eqref{defn:reynolds_stress_monokinetic}. 
The system is closed by adding a strong alignment force $\frac{1}{\e}F(\feps)$ to the kinetic equation \eqref{SM_kinetic}.
We will consider two variants of such an alignment force corresponding to the monokinetic and Maxwellian regimes. Let us start with the former.

\subsection{Monokinetic Limit}
\label{sec:monokinetic_regime} 

We consider $\eqref{SM_kinetic}$ under the strong local alignment force (setting $\l=1$ for convenience)
\begin{align}
  \label{SM_kinetic_monokinetic}
    \begin{cases}
        \partial_t \feps + v \cdot \partial_x \feps = \nabla_v \cdot (\steps (v - \uavgeps) \feps) + \frac{1}{\epsilon} \nabla_v \cdot ((v - \ueps_{\delta} )\feps) \\
        \partial_t \steps + \nabla \cdot (\steps \uavgeps) = 0  ,
    \end{cases}
\end{align}
where $\u_{\delta}$ is the special mollification given by
\begin{align}
    \label{special_mollification}
    \u_{\delta} = \Big( \frac{(\u \rho)\ast {\psi_{\delta}}}{\rho\ast {\psi_{\delta}}} \Big)\ast {\psi_{\delta}}
\end{align}
for some smooth mollifier $\psi_{\delta}(x) = \frac{1}{\delta^n} \psi(x/\delta)$, $\psi >0$. Note that this mollification resembles the averaging protocol of the \ref{Mf}-model. It was introduced in \cite{S-hypo} to circumvent the problem of roughness of the macroscopic field $\u$ and the non-uniqueness of the corresponding characteristic flow as in \cite{FK2019}. As a result, the hydrodynamic limit was extended to vacuous solutions, as it will be done here as well.  

This special mollification has several remarkable properties. First, since $\psi>0$, $\u_\d \in C^\infty$ for any $\d>0$. Therefore, the system \eqref{SM_kinetic_monokinetic} is globally well-posed. Second, the mollification has a key approximation property stating that $\u_{\delta}$ is close to $\u$ for small $\delta$ with a bound independent of $\rho$. 
The following approximation lemma can be found in \cite[Lemma 9.1]{S-EA}.
\begin{lemma}
    \label{lmma:approximation_property}
    For any $\u \in Lip$ and for any $1 \leq p < \infty$, one has 
    \begin{align*}
        \|\u_{\delta} - \u\|_{L^p(\rho)} \leq C \delta \|\u\|_{Lip}
    \end{align*}
    where $C > 0$ depends only on $\psi$ and $p$. 
\end{lemma}

We will see that the optimal resolution scale $\d$ in the limit as $\e\to 0$, is in fact $\d \sim \e^2$. However, we will state the result for $\d,\e$ assumed independent so as to see where the optimal resolution is coming from.
 
The local alignment force pushes the solution towards the monokinetic distribution
\begin{equation}\label{monokinetic_ansatz}
  f(t,x,v) = \rho(t,x) \delta(v - \u(t,x)),
\end{equation}
where $(\rho, \st, \u)$ solve the pressureless $\st$-Euler-alignment system \eqref{SM_macroscopic_pressureless}.

\begin{theorem} (Monokinetic Limit)
  \label{thm:monokinetic_limit}
  Suppose the  kernel $\Phi$ satisfies regularity conditions \eqref{reg1_kernel} and \eqref{reg2_kernel}. 
  Let $(\rho, \st, \u)$ be a smooth solution to \eqref{SM_macroscopic_pressureless} on $\T^n \times [0,T]$ and let $f$ be the corresponding monokinetic ansatz \eqref{monokinetic_ansatz}. 
  Let $(\feps, \steps)$ be a solution to \eqref{SM_kinetic_monokinetic}  with initial conditions $\steps_0 \in C^{\infty}(\T^n)$, $f_0^{\epsilon} \in C_0^k(\T^n \times \R^n)$ satisfying:
  \begin{itemize}
      \item[(i)] $\supp f_0^{\epsilon} \subset \T^n \times B_R$, for a fixed $R>0$,
      \item[(ii)]  $W_2(f_0^{\epsilon}, f_0) < \epsilon$,
      \item[(iii)] $\steps_0 = \st_0 \in C^{\infty}(\T^n)$. 
  \end{itemize}
  Then there exists a constant $C(R,T)$ such that: 
\begin{equation*}
     \sup_{t \in [0,T]} \|\steps_t - \st_t \|_{C^k} + W_2(f_t^{\epsilon}, f_t)  \leq C \sqrt{\epsilon + \frac{\delta}{\epsilon}}.
\end{equation*}
\end{theorem}


We will first verify the maximum principle on the velocity so that we can use the inherited regularity of the strength. 
The characteristic equations of \eqref{SM_kinetic_monokinetic} are given by: 
\begin{subequations}\label{eqn:characteristic_sm_kinetic_monokinetic}
\begin{align} 
      \ddt X(t,s,x,v) &= V(t,s,x,v), &  X(s,s,x,v)& = x \\
      \ddt V(t,s,x,v) &= \st(X)(\uavgeps(X) - V) + \frac{1}{\epsilon} (\ueps_{\delta} - V), \quad &  V(s,s,x,v) &= v. 
\end{align} 
\end{subequations}
As in the Cucker-Smale and $\st$-model, the measure $f_t(x,v) \dx \dv$ is the push-forward of $f_0(x,v) \dx \dv$ along the characteristic flow. 
Letting $\o := (x,v)$, $X' := X(t, \o')$, $V' := V(t, \o')$, and using the right stochasticity of $\Phi_{\rho}$, \eqref{eqn:kernel_right_stochastic}, we compute: 
\begin{align*}
  \st(X)(\uavgeps(X) - V) 
    &= \int_{\T^n \times \R^n} \st(X)  \Phi_{\rhoeps}(X, x) v \feps_t(x,v) \dx \dv - V \\
    &= \int_{\T^n \times \R^n} \st(X)  \Phi_{\rhoeps}(X, x) (v - V) \feps_t(x,v) \dx \dv \\
    &= \int_{\T^n \times \R^n} \st(X)  \Phi_{\rhoeps}(X, X') (V' - V) \feps_0(\o') \domega' . 
\end{align*}
Considering compactly supported initial data, $\supp f_0 \subset \T^n \times B_R$, and evaluating at a point of maximum, $V_+(t) = \max_{(x,v) \in \T^n \times B_R} |V(t,0,x,v)|$, we have 
\begin{align*}
  &\int_{\T^n \times \R^n} \st(X) \Phi_{\rhoeps}(X, X') (V' - V_+) \feps_0(x,v) \domega' \leq 0.
\end{align*}
For the local alignment term, recall that $\ueps_{\delta}$ is just another averaging given by a kernel  $\Phi_{\rhoeps, \delta}$ as in Table~\ref{t:kernels}, we have: 
\begin{align*}
  \frac{1}{\epsilon} \int_{\T^n \times \R^n} \Phi_{\rhoeps, \delta}(X, X') (V' - V_+) f_0(x,v) \domega' \leq 0 .
\end{align*}
Using the classical Rademacher Lemma, we obtain 
\begin{align*}
  \ddt \|V\|_{\infty} \leq 0 .
\end{align*}
This implies that if initially $\supp f_0^{\epsilon} \subset \T^n \times B_R$, then $\supp f_t^{\epsilon} \subset \T^n \times B_R$ for all $t>0$. As a consequence, we obtain uniform boundedness of the macroscopic velocities:
\begin{align*}
 | \ueps | = \frac{ \left| \int_{B_R} v \feps dv \right| }{\int_{B_R} \feps dv} \leq \frac{ R \int_{B_R} \feps dv}{\int_{B_R} \feps dv} = R ,
\end{align*}
and as a therefore, $J^\e,J \leq R T$.

Now that the $J$-terms are under control, the inheritance lemmas stated in \sect{sec:inherited_reg_from_the_kernel}  become available. As a consequence of \eqref{reg1_uavg}, \eqref{reg1_st}, we have  uniform bounds on the averaged velocities and strengths
\begin{equation}\label{e:usunif}
  \|\uavgeps \|_{C^k} + \|\steps \|_{C^k} \leq C.
\end{equation}

Next, we rephrase \eqref{reg2_uavg}, \eqref{reg2_st} in terms of modulated macroscopic energy and $W_1$-metric.
To that end, let us estimate the distance between the momenta. Let us fix an arbitrary $\|g\|_\infty, \|\n g\|_\infty \leq 1$, then
\begin{equation*}\label{}
\begin{split}
\int_{\T^n} g (\ueps \rhoeps - \u\rho)\dx &= \int_{\T^n} g (\ueps - \u) \rhoeps \dx + \int_{\T^n} g  \u(\rhoeps -\rho)\dx \\
& \leq \left( \int_{\T^n} |\ueps - \u|^2 \rhoeps \dx \right)^{1/2} + \|\n \u\|_\infty W_\BL(\rhoeps,\rho).
\end{split}
\end{equation*}
Since $\|\n \u\|_\infty$ is uniformly bounded on the given time interval, we obtain  
\begin{equation*}
W_\BL(\ueps \rhoeps, \u\rho) \lesssim \left( \int_{\T^n} |\ueps - \u|^2 \rhoeps \dx \right)^{1/2}+W_\BL(\rhoeps,\rho).
\end{equation*}
We also trivially have $W_\BL(\rhoeps,\rho) \leq W_1(\rhoeps,\rho)$. 

Consequently, by the inheritance lemmas:
\begin{subequations}\label{e:reg2'}
\begin{align}
    \label{ureg2'} 
     \|\uavgeps - \uavg\|_{C^k} & \lesssim  \left( \int_{\T^n} |\ueps - \u|^2 \rhoeps \dx \right)^{1/2}+W_1(\rhoeps,\rho), \\
    \label{sreg2'} 
  \|\steps - \st\|_{C^k} & \lesssim \int_0^t \left( \left( \int_{\T^n} |\ueps - \u|^2 \rhoeps \dx \right)^{1/2}+W_1(\rhoeps,\rho) \right)\dtau.
\end{align}
\end{subequations}

\begin{proof} (proof of Theorem \ref{thm:monokinetic_limit})
  In order to control $W_2(\feps, f)$, we consider the flow $t\to \gamma_t$ whose marginals are $\feps_t$ and $f_t$ (i.e. $\gamma_t \in \Pi(\feps_t, f_t)$)
    \begin{equation*}
    \begin{split}
        &\partial_t \gamma + v_1 \cdot \nabla_{x_1} \gamma + v_2 \cdot \nabla_{x_2} \gamma \\
            &+ \nabla_{v_1} [ \gamma \steps (v_1 - \uavgeps) + \frac{1}{\epsilon} (v_1 - \ueps_{\delta}) ]
            + \nabla_{v_2} [ \gamma \st (v_2 - \uavg) ] = 0.
    \end{split}
    \end{equation*}
    Since $\gamma_t \in \Pi(\feps_t, f_t)$,  
    \begin{align*}
        W := \int_{\T^{2n} \times \R^{2n}} |w_1 - w_2|^2  \mbox{d} \gamma_t(w_1, w_2) \geq W_2^2(\feps_t, f_t).
    \end{align*}
    So, we aim to control $W$.  Splitting it into the potential and kinetic components, we get
    \begin{align*}
        W = \int_{\T^{2n} \times \R^{2n}} |v_1 - v_2|^2 \mbox{d} \gamma_t + \int_{\T^{2n} \times \R^{2n}} |x_1 - x_2|^2 \mbox{d} \gamma_t := W_v + W_x .
    \end{align*} 
Since the $(x_1,x_2)$-marginal of $\g$ belongs to $\Pi(\rho^\e,\rho)$ we  find 
\begin{equation}\label{e:W1Wx}
W^2_1(\rho^\e,\rho)\leq W^2_2(\rho^\e,\rho) \leq W_x.
\end{equation}

Central to the proof is the modulated energy given by
 \begin{align}
  \label{defn:modulated_kinetic_energy}
  \rme(\feps | \u) = \int_{\T^n \times \R^n} | v - \u(x)|^2 \feps(x,v) \dv \dx .
\end{align}
We now recall three inequalities which do not involve the adaptive strength so they carry over from the classical case. First, we have, see \cite[(9.9)]{S-EA},
\begin{equation}\label{e:modemace}
\rme(f^\e| \u) \geq  \int_{\O}  \rho^\e| \u^\e - \u|^2 \dx.
\end{equation}
Second, we have the following system, see \cite[(9.10)]{S-EA},
    \begin{equation}
    \label{w2_kin_pot_estimate}
    \begin{split}
            \ddt W_x & \lesssim \emodu +  W_x \\
            W_v & \lesssim \emodu +  W_x.
    \end{split}
    \end{equation}
Third,  the evolution of the modulated kinetic energy can be estimated as, see \cite[page 391]{S-EA},
    \begin{align*}
        \ddt \emodu &\lesssim \emodu + \frac{\delta}{\epsilon} + (\ueps - \u, \uavgeps - \ueps)_{\kappa_{\rhoeps}} \\
        &\quad + \int_{\T^n} \rhoeps(\u - \ueps) \cdot \st (\uavg - \u) \dx .
    \end{align*}
   
 The last two terms can be written as  
    \begin{align*}
    A:=  &  (\ueps - \u, \uavgeps - \ueps)_{\kappa_{\rhoeps}} + \int_{\T^n} \rhoeps(\u - \ueps) \cdot \st (\uavg - \u) \dx  \\
          &= \int_{\T^n} \rhoeps (\u - \ueps) \cdot \big( \steps \uavgeps - \st \uavg + \st \u - \steps \ueps \big) \dx \\
        & =  \int_{\T^n} \rhoeps (\u - \ueps) \cdot ( \steps - \st) \uavgeps  \dx +\int_{\T^n} \rhoeps (\u - \ueps) \cdot \st (\uavgeps -\uavg ) \dx\\
        & \quad +\int_{\T^n}  \rhoeps (\u - \ueps) \cdot ( \st -\steps) \u \dx + \int_{\T^n}  \rhoeps |\u - \ueps|^2  \steps  \dx \\        
       & := I + II + III + IV .
    \end{align*}
 To continue   with the estimates, let us rewrite the inheritance estimates \eqref{ureg2'}- \eqref{sreg2'} using \eqref{e:W1Wx}, \eqref{e:modemace}:
 \begin{align*}
     \|\uavgeps - \uavg\|^2_{C^k} & \lesssim   \emodu +W_x, \\
  \|\steps - \st\|^2_{C^k} & \lesssim \int_0^t  ( \emodu +W_x) \dtau.
\end{align*}
 Using these along with \eqref{e:usunif}   repeatedly we obtain,
 \begin{equation*}\label{}
\begin{split}
 I & \lesssim \emodu +  \int_0^t  ( \emodu +W_x) \dtau \\
 II & \lesssim \emodu +  W_x \\
 III &\lesssim \emodu +  \int_0^t  ( \emodu +W_x) \dtau \\
 IV &\lesssim \emodu .
\end{split}
\end{equation*}
Summarizing, we obtain 
\begin{equation*}\label{ }
 \ddt \emodu \lesssim \frac{\delta}{\epsilon} + \emodu +  W_x +  \int_0^t  ( \emodu +W_x) \dtau.
\end{equation*}
All together, 
    \begin{align*}
            \ddt W_x & \lesssim \emodu + W_x \\
            \ddt \emodu &\lesssim  \frac{\delta}{\epsilon} +  \emodu +  W_x +  \int_0^t  ( \emodu +W_x) \dtau.
    \end{align*}
Adding up the two equations and denoting 
\[
X = \int_0^t  ( \emodu +W_x) \dtau, \quad Y = \emodu +W_x,
\]
we obtain
\begin{equation*}\label{}
\begin{split}
\dot{X} & = Y \\
\dot{Y} & \lesssim  \frac{\delta}{\epsilon} + X + Y.
\end{split}
\end{equation*}

Since initially $X_0 = 0$, $Y_0 \leq \e$, the  Gr{\"o}nwall Lemma implies 
    \begin{align*}
      \emodu + W_x \lesssim \epsilon + \frac{\delta}{\epsilon},
    \end{align*}
 and by \eqref{w2_kin_pot_estimate}, $W_v \lesssim \epsilon + \frac{\delta}{\epsilon}$.
\end{proof}


\subsection{Maxwellian Limit}
\label{sec:maxwellian_limit}

In this section we analyze the hydrodynamic limit for a system with strong Fokker-Planck-Alignment force:
  \begin{align}
    \label{SM_kinetic_maxwell_la}
    \begin{cases}
        \partial_t \feps + v \cdot \nabla_x \feps + \lambda \nabla_v \cdot (\steps (v - \uavgeps) \feps) + \frac{1}{\epsilon} \Big( \Delta_v \feps + \nabla_v \cdot [ \feps (\ueps_{\delta} - v) ] \Big) = 0 \\
        \partial_t \steps + \nabla_x \cdot (\steps \uavgeps) = 0.
    \end{cases}
  \end{align} 
Heuristically,  such a force pushes the solution towards a local Maxwellian
\begin{equation}
\label{formula:maxwellian}
f^\e \to   \mu(t,x) = \frac{\rho(t,x)}{(2\pi)^{n/2}} e^{-\frac{|v - \u(t,x)|^2}{2}}, \quad \text{ and } \st^\e \to \st,
\end{equation}
where the triple $(\rho, \st, \u)$ solves  the system
\begin{align}
  \label{SM_hydrodynamic_maxwellian}
  \begin{cases}
      \partial_t \rho + \nabla \cdot (\u \rho) = 0 \\
      \partial_t \st + \nabla \cdot (\st \uavg) = 0  \\
      \partial_t (\rho \u) + \nabla \cdot (\rho \u \otimes \u) + \nabla \rho = \rho \st (\uavg - \u) . 
  \end{cases}
\end{align}

Unfortunately, the regularity assumptions on the kernel $\Phi$ alone are not sufficient to justify the limit. This is due to the lack of a priori uniform bounds on the $J^\e$-norms. Such bounds would normally come from the energy / entropy inequality for the system \eqref{SM_kinetic_maxwell_la}, which in the conventional settings holds due to a priori known uniform boundedness of the prescribed strength-functions $\st_\rho$.  In our case, to get such control over $\st$ we need the energy to be bounded, which creates a circular argument.  So, let us suppose that we have a uniform bound on $\st^\e$:
\begin{equation}\label{e:seuniform}
\sup_{\e>0,t\leq T} \| \st^\e \|_\infty \leq C.
\end{equation}

Under this assumption, let us consider the equation for the entropy given by
\begin{equation}\label{eqn:kin_Heps}
\cH_{\epsilon} = \int_{\T^n \times \R^n} \big( \feps \log \feps + \frac{1}{2} |v|^2 \feps \big) \dv \dx.
\end{equation}
We have
 \begin{align*}
    \frac{d}{dt} \cH_{\epsilon} 
    &= - \frac{1}{\epsilon} \int_{\T^n \times \R^n}  \frac{|\nabla_v \feps +\feps (v - \ueps_{\delta})|^2}{\feps}\dv \dx - \frac{1}{\epsilon} [ (\ueps_{\delta}, \ueps)_{\rhoeps} - (\ueps_{\delta}, \ueps_{\delta})_{\rhoeps} ] \\
    & \quad - \int_{\T^n \times \R^n} \steps [ \nabla_v \feps \cdot (v - \uavgeps) + v \cdot (v - \uavgeps) \feps ] \dv \dx \\
    &:= A_1 + A_2 + A_3.
  \end{align*}
  $A_1$ is the modulated Fisher information, which is  negative, so it can be dropped. According to \cite{S-EA} (due to  the so-called ball-positivity of the \ref{Mf}-model), 
  $A_2$ is also negative. Lastly,
  \begin{align*}
    A_3 =  n \int_{\T^n \times \R^n} \steps \feps \dv \dx  -\int_{\T^n \times \R^n} \steps |v|^2 \feps \dv \dx + (\ueps, \uavgeps)_{\kappa^{\epsilon}} .
  \end{align*}
  Due to assumed boundedness of the strength-functions, \eqref{e:seuniform}, 
  \begin{align*}
    A_3 \leq c_1  + (\ueps, \uavgeps)_{\kappa^{\epsilon}}  \leq c_1 + c_2 \|\ueps\|_{L^2(\rhoeps)}^2 \leq c_1+ c_2 \cE_\e,
  \end{align*}
where  $\cE_{\epsilon} = \int |v|^2 \feps(x,v) \dv \dx$. There is a classical inequality, see for example \cite{S-EA} in our context, stating that
\[
\cE_{\epsilon} \leq 2 \cH_\e + C.
\]
Thus,  we arrive at
\[
  \frac{d}{dt} \cH_{\epsilon} \leq c_1 + c_2 \cH_\e,
  \]
  with $c_1,c_2>0$ independent of $\e$.  Consequently, 
\begin{equation}\label{e:EHunif}
\sup_{\e>0,t\leq T} \cH_\e + \cE_\e \leq C.
\end{equation}
This implies in particular, since $\cE_\e\geq  \|\ueps\|_{L^2(\rhoeps)}^2 $, that 
\begin{equation*}\label{e:Junif}
\sup_{\e>0} J_\e \leq C.
\end{equation*}
As a result, we have the inheritance estimates \eqref{e:reg2'} at our disposal.

In general, the condition \eqref{e:seuniform} is not guaranteed to hold automatically for solutions of \eqref{SM_kinetic_maxwell_la}. However, it does hold in the obvious manner for the $\wt$-model. Indeed, if the model is based on the Favre filtration with a non-degenerate communication kernel \eqref{e:nondeg}, recall that $\st = (\rho \ast \phi) \wt$, where $\wt$ solves a pure transport equation. 
If the initial data for strength is the same $\steps_0 = \st_0 \in C^{\infty}(\T^n)$, then $\wt_0^\e = \wt_0 \in L^\infty$. By the transport of $\wt$, $\wt^\e$ will remain uniformly bounded; in addition, $\rho^\e \ast \phi \leq \|\phi\|_\infty$. We therefore have \eqref{e:seuniform}  satisfied in this particular case.  In the hydrodynamic limit for $\st$-models, condition \eqref{e:seuniform}  is the only requirement that separates us from the general result. 

We state the convergence result on the level of relative entropy:
\begin{equation*}
  \label{defn:relative_entropy}
  \cH(\feps | \mu) = \int_{\T^n \times \R^n} \feps \log \frac{\feps}{\mu} \dv \dx .
\end{equation*}
Due to the classical \CK, for some constant $c > 0$, this would also imply convergence in $L^1$-norm:
\begin{align*}
  c \|\feps - \mu\|_{L^1(\T^n \times \R^n)} \leq \cH(\feps | \mu).
\end{align*}
Furthermore, defining the Maxwellian associated to solutions of \eqref{eqn:SM_maxwellian_epsilon},
\begin{equation*}\label{ }
\mueps(t,x) = \frac{\rhoeps(t,x)}{(2\pi)^{n/2}} e^{-\frac{|v - \ueps(t,x)|^2}{2}},
\end{equation*}
the relative entropy can then be split into the sum
\begin{align*}
  &\cH(\feps | \mu) = \cH(\feps | \mueps) + \cH(\mueps | \mu) \\
  &\cH(\mueps | \mu) = \frac{1}{2} \int_{\T^n} \rhoeps |\ueps - \u|^2 \dx + \int_{\T^n} \rhoeps \log(\rhoeps / \rho) \dx .
\end{align*}
So, again, due to the \CK, $\cH(\feps | \mu) \to 0$ implies 
\begin{align*}
  &\rhoeps \to \rho \\
  &\ueps \rhoeps \to \u \rho 
\end{align*}
in $L^1(\T^n)$, which in turn, due to the inheritance bounds \eqref{reg2_st} would imply $\|\steps - \st\|_{C^k} \to 0$.
\begin{theorem}
  \label{thm:maxwellian_limit} (Maxwellian Limit) Consider the $\wt$-model with non-degenerate communication kernel \eqref{e:nondeg}. 
  Let $(\rho, \st, \u)$ be a smooth, non-vacuous solution to \eqref{SM_hydrodynamic_maxwellian} on $\T^n \times [0,T]$ and let $\mu$ be the Maxwellian given in \eqref{formula:maxwellian}.
  Let $(\feps, \steps)$ be a solution to the kinetic system \eqref{SM_kinetic_maxwell_la} with initial data  $\steps_0 \in C^{\infty}(\T^n)$ and $f_0^{\epsilon} \in C_0^k(\T^n \times \R^n)$  satisfying:
  \begin{itemize}
      \item[(i)]  $\cH(\feps_0 | \mu_0) \to 0$ as $\epsilon \to 0$,  
      \item[(ii)] $\steps_0 = \st_0 \in C^{\infty}(\T^n)$.
  \end{itemize}
  Then for $\delta = o(\epsilon)$, 
\begin{equation*}\label{}
\begin{split}
    \sup_{t \in [0,T]} \cH(\feps | \mu) & \to 0 \\
    \sup_{t \in [0,T]} \|\steps - \st\|_{C^k} & \to 0, \quad \forall k\in \N.
\end{split}
\end{equation*}
\end{theorem}

\begin{proof} 

Breaking the relative entropy into the kinetic and macroscopic parts, we have
\begin{equation*}
  \label{eqn:rel_entropy_kinetic_macro}
  \cH(\feps | \mu) = \cH_{\epsilon} + \cG_{\epsilon} + \frac{n}{2}\log(2\pi), 
 \end{equation*} 
 where $\cH_{\epsilon}$ is the entropy given in \eqref{eqn:kin_Heps} and 
 \begin{equation*}
  \label{eqn:macro_Geps}
  \cG_{\epsilon} = \int_{\T^n} \big( \frac{1}{2} \rhoeps |\u|^2 - \rhoeps \ueps \cdot \u - \rhoeps \log \rho \big) \dx  .  
\end{equation*}
We then seek to estimate $\cH_{\epsilon}$ and $\cG_{\epsilon}$.  This is done in the same manner as in \cite{S-EA}, but we present the modifications in order to accommodate for the adaptive strength. 
The macroscopic system for the $\e$-quantities is given by:
\begin{equation}
  \label{eqn:SM_maxwellian_epsilon}
  \begin{cases}
      \partial_t \rhoeps + \nabla \cdot (\ueps \rhoeps) = 0 \\
      \partial_t \steps + \nabla \cdot (\steps \uavgeps) = 0 \\ 
      \partial_t (\rho \ueps) + \nabla \cdot (\rho \ueps \otimes \ueps) + \nabla \rhoeps
        + \nabla \cdot \Reps \\ 
        \quad = \rhoeps \steps (\uavgeps - \ueps) 
          +\frac{1}{\epsilon} \rhoeps (\ueps_{\delta} - \ueps), 
  \end{cases}
\end{equation}
where $\ueps_{\delta}$ is given in \eqref{special_mollification} and  
\begin{align*}
  \Reps(t,x) = \int_{\T^n} ((v-\ueps) \otimes (v - \ueps) - \mathbb{I}) \feps \dv.
\end{align*}
We have, see \cite[(9.21)]{S-EA},
\begin{equation*} \label{ineq:Heps}
  \frac{d}{dt} \Heps 
    \leq 
  -\frac{1}{\epsilon} \Ieps + \frac{\epsilon}{4} \int_{\T^n \times \R^n} \steps |v - \ueps|^2 \feps \dv \dx 
      - \|\ueps\|_{L^2(\kappa_{\epsilon})^2} + (\ueps, \uavgeps)_{\kappa^{\epsilon}} ,
\end{equation*}
where the relevant the Fisher information, $\Ieps$, is given by
\begin{align}
  \label{defn:Fisher_information}
  &\Ieps = \int_{\T^n \times \R^n} \frac{ | \nabla_v \feps + (1 + \epsilon \steps /2) (v - \ueps) \feps |^2 }{\feps} \dv \dx .  
\end{align}
 Thanks to the uniform boundedness of the strength \eqref{e:seuniform}, we have 
  \begin{equation*}
   \int_{\T^n \times \R^n} \steps |v - \ueps|^2 \feps \dv \dx \lesssim \rme(\feps | \ueps),
\end{equation*}
and $\rme(\feps | \ueps) \leq \cE_\e \leq 2 \cH_\e + C \leq C$, as we concluded earlier so this term is bounded.

Turning to the macroscopic relative entropy,
\begin{equation*}
\begin{split}
  \label{eqn:Geps}
  \ddt \Geps 
  	&= \int_{\T^n} [ \nabla \u: \Reps - \rhoeps (\ueps - \u) \cdot \nabla \u \cdot (\ueps - \u) ] \dx
    + \frac{1}{\epsilon} \int_{\T^n} \rhoeps (\ueps_{\delta} - \ueps) \cdot \u \dx \\
    & \quad + \|\ueps\|_{L^2(\kappaeps)}^2 - (\ueps, \uavgeps)_{\kappaeps} + A , 
    \end{split}
\end{equation*}
where 
\[
A = (\ueps - \u, \uavgeps - \ueps)_{\kappa_{\rhoeps}} + \int_{\T^n} \rhoeps(\u - \ueps) \cdot \st (\uavg - \u) \dx 
\]
is the same alignment term as in the monokinetic case, which we will estimate using the original inheritance bounds \eqref{e:reg2'}.
We have
\begin{equation*}\label{ }
A \lesssim \int_{\T^n} \rhoeps |\ueps - \u|^2 \dx + W_1^2(\rhoeps, \rho) + \int_0^t \left[  \int_{\T^n} \rhoeps |\ueps - \u|^2 \dx + W_1^2(\rhoeps, \rho) \right] \dtau.
\end{equation*}
Let us note that 
\[
\int_{\T^n} \rhoeps |\ueps - \u|^2 \dx + W_1^2(\rhoeps, \rho) \leq \cH(\feps | \u).
\]
Hence,
\begin{equation*}\label{ }
A \lesssim  \cH(\feps | \u) + \int_0^t \cH(\feps | \u) \dtau.
\end{equation*}

We observe that when adding up the equations for $\Geps$ and $\Heps$, the energy term $\|\ueps\|_{L^2(\kappaeps)}^2 - (\ueps, \uavgeps)_{\kappaeps}$ cancels. Next, thanks to the assumed smoothness of $\u$, 
\begin{align*}
  \Big| \int_{\T^n} \rhoeps (\ueps - \u) \cdot \nabla \u \cdot (\ueps - \u) \dx \Big| 
    \lesssim \int_{\T^n} \rhoeps |\ueps - \u|^2 \dx \leq \cH(\feps | \u) .
\end{align*}
The local alignment term is the same as in the monokinetic case and it is handled by \cite[(9.14)]{S-EA}: 
\begin{align*}
  \frac{1}{\epsilon} \int_{\T^n} \rhoeps (\ueps_{\delta} - \ueps) \cdot \bu \dx \lesssim \frac{\delta}{\epsilon} .
\end{align*}

  The Reynolds stress term can be written
  \begin{align*}
    \Reps &= \int_{\R^n} [2 \nabla_v \sqrt{\feps} + (1 + \epsilon \steps /2) (v - \ueps) \sqrt{\feps}]
      \otimes [(v - \ueps) \sqrt{\feps}] \dv \\
      & \quad - \epsilon \steps/2 \int_{\R^n} (v - \ueps) \otimes (v - \ueps) \dv  .
  \end{align*}
 Once again due to the uniform boundedness of the strength \eqref{e:seuniform}, we have 
  \begin{equation*}\label{ }
   \Reps \lesssim \sqrt{\rme(\feps | \ueps) \Ieps} + \epsilon \rme(\feps | \ueps),
\end{equation*}
and as before, $\rme(\feps | \ueps)$ is bounded so
\[
\Reps \lesssim \sqrt{ \Ieps} + \epsilon \leq  \frac{1}{2\epsilon} \Ieps + 2 \epsilon.
\]
In total, we get 
  \begin{equation*}
  \begin{split}
    \frac{d}{dt} \Geps 
    	&\leq c_1 \cH(\feps | \mu) + c_2 \int_0^t \cH(\feps | \u) \dtau+  \frac{1}{2\epsilon} \Ieps  + c_3(\epsilon + \frac{\delta}{\epsilon}) \\
    & \quad + \|\ueps\|_{L^2(\kappaeps)} - (\ueps, \uavgeps)_{\kappaeps}.
    \end{split}
  \end{equation*}
  Combining the estimates on $\Heps$ and $\Geps$  we arrive at 
  \begin{align*}
    \frac{d}{dt} \cH(\feps | \mu) \lesssim \cH(\feps | \mu) +\int_0^t \cH(\feps | \mu) \dtau+ \epsilon + \frac{\delta}{\epsilon}.
  \end{align*}
The Gr{\"o}nwall Lemma concludes the proof. 
\end{proof}


\section{Relaxation to Global Maxwellian in 1D}
\label{sec:relaxation}

Adding noise with the strength coefficient to the velocity equation leads to the Fokker-Planck-Alignment system given by
\begin{align}
  \label{SM_kinetic_relaxation}
  \begin{cases}
      \partial_t f + v \cdot \nabla_x f + \nabla_v \cdot (\st (v - \uavg) f) = \sigma \st \D_v f \\
      \partial_t \st + \nabla_x \cdot (\st \uavg) = 0.
  \end{cases}
\end{align}
We study the case where the velocity averaging is given by the Favre averaging \eqref{Favre_averaging} (i.e. the $\wt$-model). 
Recall that the weight $\wt$ is defined by $\st = ( \rho \ast \phi ) \wt $ and the system becomes 
\begin{align}
  \label{e:FPA}
  \begin{cases}
      \partial_t f + v \cdot \nabla_x f = ( \rho \ast \phi ) \wt \n_v \cdot(  \sigma  \nabla_v f - v f) + \wt ((\u\rho) \ast \phi) \cdot \nabla_v f \\
      \partial_t \wt + \u_F \cdot \nabla_x \wt  = 0.
  \end{cases}
\end{align}

Let us discuss the well-posedness of \eqref{e:FPA}.  The general well-posedness theory for kinetic alignment equations based on a predefined strength $\st_\rho$ has been developed in \cite{S-EA}.  
The new system, under the uniform regularity assumptions \eqref{reg1_kernel} and \eqref{reg2_kernel}, which for the Favre filtration (with $\Phi_{\rho}(x,y) = \phi(x-y) / \rho\ast {\phi}(x)$) means all-to-all interactions \eqref{e:nondeg},
falls directly into the same framework. In fact, in this case every flock is automatically ``thick,'' meaning that 
\[
\inf_{\O} \rho \ast \phi = c_0 >0.
\]
We also have the a priori bounded energy 
\[
\dot{\cE} \leq c_1 \cE + c_2, \hspace{5mm} \cE(t) = \frac{1}{2} \int_{\Omega \times \R^n} |v|^2 f_t(x,v) \dx \dv
\]
on any finite time interval. 
Indeed, multiplying \eqref{e:FPA} by $\frac{1}{2}|v|^2$ and integrating by parts, we have: 
\begin{equation*}
\begin{split}
  &\ddt \frac{1}{2} \int_{\Omega \times \R^n} |v|^2 f \dx \dv \\
    &= \int_{\Omega \times \R^n} (\rho \ast \phi) \wt  v \cdot (vf - \sigma \nabla_v f) \dv \dx - \int_{\Omega \times \R^n} \wt ((\u \rho) \ast \phi) \cdot v f \dv \dx  \\
    &= \int_{\Omega \times \R^n} (\rho \ast \phi) \wt  \cdot |v|^2 f \dv \dx + n \sigma \int_{\Omega \times \R^n} (\rho \ast \phi) \wt  f \dv \dx \\
    & \quad - \int_{\Omega \times \R^n} \wt ((\u \rho) \ast \phi) \cdot v f \dv \dx  \\
    &= \int_{\Omega} (\rho \ast \phi) \wt  |\bu|^2 \rho \dx + n \sigma \int_{\Omega} (\rho \ast \phi) \wt  \rho \dx - \int_{\Omega} \wt ((\u \rho) \ast \phi) \cdot (\bu \rho) \dx  .
    \end{split}
\end{equation*}
Since $\wt$ remains bounded, the first term bounds the energy and the second term is bounded. 
For the last term, we estimate, 
\begin{align*}
  \int_{\Omega} \wt ((\u \rho) \ast \phi) \cdot (\bu \rho)  \dx \leq \wt_+ \phi_+ \|\bu\|_{L^1(\rho)}^2 \lesssim \wt_+ \phi_+ \|\bu\|_{L^2(\rho)}^2 = \wt_+ \phi_+ \cE,
\end{align*}
which yields the desired energy inequality. 
Moreover, since $\|\bu\|_{L^1(\rho)} \lesssim \|\bu\|_{L^2(\rho)} = \sqrt{\cE}$, we have the inheritance of regularity: 
\[
\| \p_x^k \u_F \|_\infty < C(k,J,T),
\]
for any $k\in \N$.  Consequently, according to (a trivial adaptation of) \cite[Theorem 7.1]{S-EA} for any data in the classical weighted Sobolev spaces $f_0\in H^{m}_q(\domain)$ and $\wt_0 \in C^m$, $\inf_{\O} \wt_0 >0$, $q>m$, where
\begin{equation*}\label{e:Sobdef}
H^{m}_q(\domain) =  \left\{ f :   \sum_{ k + l \leq m} \sum_{|\bk| = k, |\bl| = l}  \int_\domain | \jap{v}^{q - k - l}  \p^{\bk}_{x} \p^{\bl}_v f |^2 \dv \dx <\infty \right\},
\end{equation*}
there exists a unique global in time classical solution in the same space.  Let us note that the transport of $\wt$ preserves the lower bound on $\wt$ uniformly in time, and by the automatic thickness condition we have the ellipticity coefficient $(\rho \ast \phi) \wt$ uniformly bounded away from zero as well.  

Let us now recall a set of conditions on a given global solution $f$ that are sufficient to imply the global relaxation of $f$ to the Maxwellian
\begin{align*}
  \mu_{\sigma, \bar{\u}} = \frac{1}{|\T^n| (2\pi \sigma)^{n/2}} e^{-\frac{|v-\bar{\u}|^2}{2\sigma}},
\end{align*}
where $\bar{\u} = \int_\O \bu \rho \dx$. We note that this total momentum $\bar{\u}$ is not  preserved in time generally, but rather satisfies the equation
\begin{equation}\label{eqn:avg_velocity_evolution}
\partial_t \bar{\u} = \int_\O (\u_F - \u) \dk_\rho, \qquad \dk_{\rho} = \st \rho \dx.
\end{equation}
We can't determine whether $\bar{\u}$ eventually settles to a fixed vector, as it does for all of the classical alignment models including those that do not preserve the momentum. 

According to \cite[Proposition 8.1]{S-EA} a given solution $f$ converges to the Maxwellian in the sense defined in \eqref{e:relax} provided there exists a set of fixed constants $c_0,... >0$ such that 
\begin{enumerate} [(i)]
  \item  $c_0 \leq \st \leq c_1$ and $\|\nabla \st\|_{\infty} \leq c_2$,
  \item 
  \begin{align}
    \label{e:spgap}
    \sup  \Big\{ (\u, \uavg)_{\kappa_{\rho}} : \u \in L^2(\kappa_{\rho}), \bar{\u} = 0, \|\u\|_{L^2(\kappa_{\rho})} = 1 \Big\} \leq 1 - \epsilon_0,
  \end{align}
  \item $\|\st [\cdot]_{\rho}\|_{L^2(\rho) \to L^2(\rho)} + \|\nabla_x (\st [\cdot]_{\rho})\|_{L^2(\rho) \to L^2(\rho)} \leq c_3$.  
\end{enumerate}

We will be able to show (i)-(iii) in one dimensional case. 
Let us discuss these conditions starting from (i).  The key condition here is $\| \partial_x \wt \|_\infty \leq c_2$.  We can establish such uniform control in 1D only. Indeed, since
\[
\p_t \partial_x \wt + \partial_x (u_F \partial_x \wt) = 0,
\]
we can see that $\partial_x \wt$ satisfies the same continuity equation as $\rho \ast \phi$ (this only holds in 1D). Thus, 
\[
\p_t \frac{\partial_x \wt}{\rho \ast \phi} + u_F \p_x \frac{\partial_x \wt}{\rho \ast \phi} = 0,
\]
Since initially $\frac{|\partial_x \wt|}{\rho \ast \phi} \leq C$ for some large $C>0$ due to the all-to-all interaction assumption, this bound will persist in time.  
Hence, $\| \partial_x \wt \|_\infty \leq C \| \rho \ast \phi\|_\infty \leq c_2$.

Condition (iii) follows from (i). Indeed, since $\wt, \partial_x \wt$ remain uniformly bounded, it reduces to 
\[
\int_\O | (u\rho) \ast \phi |^2  \rho \dx + \int_\O | (u\rho) \ast {\partial_x \phi} |^2  \rho \dx \leq c_3 \|u\|^2_{L^2(\rho)}.
\]
This follows from the \HI.

Finally, let us address (ii). We reduce the computation of the spectral gap to the low-energy method for the classical Cucker-Smale model.  
To this end, we assume that $\phi = \psi \ast \psi$ where $\psi>0$ is another positive kernel. In other words, $\phi$ is Bochner-positive.    
In order to establish \eqref{e:spgap} it suffices to show that 
\begin{equation}
\label{ineq:suff_spgap}
(u, u)_{\k_\rho} - (u, \ave{u}_\rho)_{\k_\rho} \geq \epsilon_0 (u, u)_{\k_\rho}.
\end{equation}
Let us start  by symmetrizing and using cancellation  in the second obtained integral:
\begin{equation*}\label{}
\begin{split}
(u, u)_{\k_\rho} - (u, \ave{u}_\rho)_{\k_\rho} & = \int_{\O \times \O} u(x) \cdot (u(x) - u(y)) \rho(x) \rho(y) \phi(x-y) \wt(x) \dy \dx \\
&= \frac12 \int_{\O \times \O} |u(x) - u(y)|^2 \rho(x) \rho(y) \phi(x-y) \wt(x) \dy \dx \\
& \quad - \frac12 \int_{\O \times \O}  | u(y)|^2 \rho(x) \rho(y)(\wt(x) - \wt(y)) \phi(x-y) \dy \dx \\
&= I + II.
\end{split}
\end{equation*}
Notice that
\begin{equation*}
\begin{split}
	I &\geq \wt_-  \int_{\O \times \O} u(x) \cdot (u(x) - u(y)) \rho(x) \rho(y) \phi(x-y) \dy \dx \\
	&= \wt_- [ (u, u)_{(\rho \ast \phi) \rho} - (u, \ave{u}_\rho)_{(\rho \ast \phi) \rho} ].
\end{split}
\end{equation*}

The difference of the inner products inside the bracket represents exactly the spectral gap of the Cucker-Smale model computed in \cite[Proposition 4.16]{S-EA}. From that computation it follows that 
\[
(u, u)_{(\rho \ast \phi) \rho} - (u, \ave{u}_\rho)_{(\rho \ast \phi) \rho} \geq c  (u, u)_{(\rho \ast \phi) \rho}\geq c \wt_-  (u, u)_{\k_\rho},
\]
where $c$ depends only on the kernel $\psi$. 

Let us now estimate $II$:
\[
II \leq (\wt_+ - \wt_-)  (u, u)_{(\rho \ast \phi) \rho} \leq \frac{\wt_+ - \wt_-}{\wt_-} (u, u)_{\k_\rho}.
\]
We can see that provided
\[
\frac{\wt_+ - \wt_-}{\wt_-}  \leq \frac12 c \wt_-,
\]
we obtain \eqref{ineq:suff_spgap}, which is the desired result. 

Let us now collect all of the assumptions we have made and state the main result.

\begin{theorem}
  \label{thm:relaxation_wm}
  Suppose $n=1$ and the kernel is Bochner-positive, $\phi = \psi \ast \psi$, with $\inf \psi >0$.  Then any initial distribution $f_0\in H^{m}_q(\domain)$ and strength $\wt_0 \in C^m$ satisfying the following small variation assumption
  \begin{equation*}\label{ }
\frac{\sup (\wt_0) - \inf (\wt_0)}{\inf (\wt_0)^2}  \leq c,
\end{equation*}
for some absolute $c>0$, gives rise to a global classical solution $f,\wt$ which relaxes to the Maxwellian exponentially fast:
\begin{equation}\label{e:relax}
    \|f(t) - \mu_{\sigma, \bar{u}}\|_{L^1(\T^n \times \R^n)} \leq c_1 \sigma^{-1/2} e^{-c_2 \sigma t}.
\end{equation}
\end{theorem}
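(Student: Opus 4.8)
The plan is to verify the three hypotheses (i)--(iii) of \cite[Proposition 8.1]{shvydkoy2022environmental} for the global classical solution $(f,\wt)$ and then invoke that proposition; essentially all the analytic content has been assembled in the discussion preceding the statement, so the proof will mainly be a matter of organizing it and tracking that every constant is uniform in time. First I would record global well-posedness: Bochner positivity forces $\inf_\O \phi = c_0 > 0$, hence the automatic thickness $\rho_\phi \geq c_0 M$ and the a priori energy bound $\dot{\cE}\leq c_1\cE+c_2$ on finite intervals, which yield $\|\partial_x^k \u_F\|_\infty \leq C(k,M,T)$ for all $k$ and let a trivial adaptation of \cite[Theorem 7.1]{shvydkoy2022environmental} produce a unique global classical solution in $H^{m}_q(\domain)\times C^m$ from data $f_0\in H^{m}_q(\domain)$, $\wt_0\in C^m$ with $\inf_\O \wt_0 > 0$.

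Next I would observe that the structural constants persist in time. Since $\wt$ solves the pure transport $\partial_t \wt + \u_F\cdot\nabla_x\wt = 0$ along the Lipschitz field $\u_F$, its range $[\wt_-,\wt_+]=[\inf\wt_0,\sup\wt_0]$ and the lower bound $\wt_->0$ are conserved, so the small variation hypothesis $(\wt_+-\wt_-)/\wt_-^2 \leq cM^3$ continues to hold for all $t$; combined with $c_0 M \leq \rho_\phi \leq \phi_+ M$ this bounds $\st = \wt\rho_\phi$ above and below uniformly in $t$. For the gradient bound in (i) the crucial point is the one-dimensional identity $\partial_t\partial_x\wt + \partial_x(\u_F\,\partial_x\wt)=0$: $\partial_x\wt$ satisfies the same continuity equation as $\rho_\phi$, so $\partial_x\wt/\rho_\phi$ is transported, stays bounded by its initial (all-to-all) bound, and hence $\|\partial_x\wt\|_\infty \leq C\|\rho_\phi\|_\infty$, giving $\|\nabla\st\|_\infty\leq c_2$. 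Condition (iii) then follows from (i): with $\wt,\nabla\wt$ uniformly bounded it reduces to $\int_\O|(\u\rho)_\phi|^2\rho\dx + \int_\O|(\u\rho)_{\nabla\phi}|^2\rho\dx \lesssim \|\u\|_{L^2(\rho)}^2$, which is the \HI.

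The remaining and most delicate point is the spectral gap (ii), i.e. \eqref{e:spgap}. Here I would use Bochner positivity, $\phi=\psi\ast\psi$, to symmetrize $(u,u)_{\kappa_\rho}-(u,\ave{u}_\rho)_{\kappa_\rho}$ into a good part $I$ and an error $II$: the good part dominates $\wt_-$ times the analogous Cucker--Smale quantity, which has a spectral gap $\gtrsim M^3$ by \cite[Proposition 4.16]{shvydkoy2022environmental} (with constant depending only on $\psi$), so $I\gtrsim \wt_- M^3 (u,u)_{\kappa_\rho}$; the error is controlled by the oscillation of the weight, $II\leq \tfrac{\wt_+-\wt_-}{\wt_-}(u,u)_{\kappa_\rho}$, and the small variation assumption is exactly what keeps the net constant $\epsilon_0 = c\wt_- M^3 - (\wt_+-\wt_-)/\wt_-$ positive. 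I expect this step to be the main obstacle: it is where both $n=1$ and Bochner positivity are genuinely used, and one must be careful that the Cucker--Smale gap constant depends only on $\psi$ and $M$ (not on the density $\rho$) so that $\epsilon_0$ is a bona fide fixed constant.

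Finally, with (i)--(iii) verified and all constants uniform in time, \cite[Proposition 8.1]{shvydkoy2022environmental} delivers $\|f(t)-\mu_{\sigma,\bar u}\|_{L^1(\T^n\times\R^n)} \leq c_1\sigma^{-1/2}e^{-c_2\sigma t}$, i.e. \eqref{e:relax}, with $\bar u$ evolving according to \eqref{eqn:avg_velocity_evolution}.
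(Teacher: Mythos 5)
Your proposal is correct and follows essentially the same route as the paper: global well-posedness from all-to-all interaction and the energy bound via \cite[Theorem 7.1]{shvydkoy2022environmental}, condition (i) from the 1D identity that $\partial_x \wt/\rho_\phi$ is transported, (iii) from (i) by H\"older, the spectral gap (ii) by the Bochner-positive symmetrization into $I+II$ with the Cucker--Smale gap of \cite[Proposition 4.16]{shvydkoy2022environmental} and the oscillation error absorbed by the small variation assumption, and then \cite[Proposition 8.1]{shvydkoy2022environmental}. No gaps.
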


We note once again that the solution relaxes to a moving Maxwellian centered around a time-dependent momentum $\bar{u}$. 
There are two conceivable mechanisms of stabilizing $\bar{u}$. One is sufficiently fast alignment:
\begin{equation}\label{e:fastalign}
\int_0^\infty \sup_{x,y} |u(x,t)- u(y,t)| \dt <\infty,
\end{equation}
which our relaxation seems to be not strong enough to imply. 
Another is stabilization of the density to a uniform distribution $\rho \to \frac{1}{|\O|}$, which we do have from \eqref{e:relax}  and in the case when $\wt \equiv \const$ it does imply exponential stabilization of the momentum for Favre-based models, see \cite{S-EA}. 
If $\wt$ varies, even if  $\rho = \frac{1}{|\O|}$, from \eqref{eqn:avg_velocity_evolution} we have,
\[
 \partial_t \bar{u} = \int_\O (u_F - u) \dk_\rho = \frac{1}{|\O|} \int_\O ( u \ast\phi - u \|\phi\|_{L^1}) \wt \dx.
\]
We can see that unless \eqref{e:fastalign} holds, the persistent variations of $\wt$ may keep this term large leaving the possibility of a forever moving momentum $\bar{u}$.

\section*{Acknowledgments}
This work was  supported in part by NSF
grant  DMS-2405326 and the Simons Foundation.









\medskip
\medskip


\begin{thebibliography}{99}


\bibitem[ABF{\etalchar{+}}19]{ABFHKPPS}
G.~Albi, N.~Bellomo, L.~Fermo, S.-Y. Ha, J.~Kim, L.~Pareschi, D.~Poyato, and
  J.~Soler.
\newblock Vehicular traffic, crowds, and swarms: {F}rom kinetic theory and
  multiscale methods to applications and research perspectives.
\newblock {\em Math. Models Methods Appl. Sci.}, 29(10):1901--2005, 2019.

\bibitem[CC21]{CCh2021}
J.~A. Carrillo and Y.~P. Choi.
\newblock Mean-field limits: From particle descriptions to macroscopic
  equations.
\newblock {\em Arch. Ration. Mech. Anal.}, 241(3):1529--1573, June 2021.

\bibitem[CCTT16]{CCTT2016}
J.~A. Carrillo, Y.-P. Choi, E.~Tadmor, and C.~Tan.
\newblock Critical thresholds in 1{D} {E}uler equations with non-local forces.
\newblock {\em Math. Models Methods Appl. Sci.}, 26(1):185--206, 2016.

\bibitem[CFRT10]{CFRT2010}
J.~A. Carrillo, M.~Fornasier, J.~Rosado, and G.~Toscani.
\newblock Asymptotic flocking dynamics for the kinetic {C}ucker-{S}male model.
\newblock {\em SIAM J. Math. Anal.}, 42(1):218--236, 2010.

\bibitem[CS07a]{CS2007a}
F.~Cucker and S.~Smale.
\newblock Emergent behavior in flocks.
\newblock {\em IEEE Trans. Automat. Control}, 52(5):852--862, 2007.

\bibitem[CS07b]{CS2007b}
F.~Cucker and S.~Smale.
\newblock On the mathematics of emergence.
\newblock {\em Jpn. J. Math.}, 2(1):197--227, 2007.

\bibitem[Fav83]{Favre}
A.~Favre.
\newblock Turbulence: Space-time statistical properties and behavior in
  supersonic flows.
\newblock {\em The Physics of Fluids}, 26(10):2851--2863, 1983.

\bibitem[FK19]{FK2019}
A.~Figalli and M.-J. Kang.
\newblock A rigorous derivation from the kinetic {C}ucker-{S}male model to the
  pressureless {E}uler system with nonlocal alignment.
\newblock {\em Anal. PDE}, 12(3):843--866, 2019.

\bibitem[HL09]{HL2009}
S.-Y. Ha and J.-G. Liu.
\newblock A simple proof of the {C}ucker-{S}male flocking dynamics and
  mean-field limit.
\newblock {\em Commun. Math. Sci.}, 7(2):297--325, 2009.

\bibitem[HT08]{HT2008}
S.-Y. Ha and E.~Tadmor.
\newblock From particle to kinetic and hydrodynamic descriptions of flocking.
\newblock {\em Kinet. Relat. Models}, 1(3):415--435, 2008.

\bibitem[KMT15]{KMT2015}
T.~K. Karper, A.~Mellet, and K.~Trivisa.
\newblock Hydrodynamic limit of the kinetic {C}ucker-{S}male flocking model.
\newblock {\em Mathematical Models and Methods in Applied Sciences},
  25(1):131--163, 2015.

\bibitem[Kur75]{Kur1975}
Y.~Kuramoto.
\newblock Self-entrainment of a population of coupled nonlinear oscillators.
\newblock {\em International Symposium on Mathematical Problems in Theoretical
  Physics, (H. Araki ed.), Lecture Notes in Physics}, 30:420--422, 1975.

\bibitem[LLST22]{LLST-mass}
Daniel Lear, Trevor~M. Leslie, Roman Shvydkoy, and Eitan Tadmor.
\newblock Geometric structure of mass concentration sets for pressureless
  {E}uler alignment systems.
\newblock {\em Adv. Math.}, 401:Paper No. 108290, 30, 2022.

\bibitem[LS19]{LS2019}
Trevor~M. Leslie and Roman Shvydkoy.
\newblock On the structure of limiting flocks in hydrodynamic {E}uler alignment
  models.
\newblock {\em Math. Models Methods Appl. Sci.}, 29(13):2419--2431, 2019.

\bibitem[LS21]{LS-uni2}
Daniel Lear and Roman Shvydkoy.
\newblock Unidirectional flocks in hydrodynamic {E}uler alignment system {II}:
  {S}ingular models.
\newblock {\em Commun. Math. Sci.}, 19(3):807--828, 2021.

\bibitem[LS22]{LS-uni1}
Daniel Lear and Roman Shvydkoy.
\newblock Existence and stability of unidirectional flocks in hydrodynamic
  {E}uler alignment systems.
\newblock {\em Anal. PDE}, 15(1):175--196, 2022.

\bibitem[MMPZ19]{MMPZ-survey}
P.~Minakowski, P.~B. Mucha, J.~Peszek, and E.~Zatorska.
\newblock Singular {C}ucker-{S}male dynamics.
\newblock In {\em Active particles, {V}ol. 2}, Model. Simul. Sci. Eng.
  Technol., pages 201--243. Birkh\"{a}user/Springer, Cham, 2019.

\bibitem[MT11]{MT2011}
S.~Motsch and E.~Tadmor.
\newblock A new model for self-organized dynamics and its flocking behavior.
\newblock {\em J. Stat. Phys.}, 144(5):923--947, 2011.

\bibitem[MT14]{MT2014}
S.~Motsch and E.~Tadmor.
\newblock Heterophilious dynamics enhances consensus.
\newblock {\em SIAM Rev.}, 56(4):577--621, 2014.

\bibitem[PEG09]{Darwin}
L.~Perea, P.~Elosegui, and G.~Gomez.
\newblock Extension of the {C}ucker-{S}male control law to space flight
  formations.
\newblock {\em Journal of Guidance, Control, and Dynamics}, 32:526 -- 536,
  2009.

\bibitem[Rey87]{Rey1987}
C.~W. Reynolds.
\newblock Flocks, herds and schools: A distributed behavioral model.
\newblock {\em {ACM} {SIGGRAPH} Computer Graphics}, 21:25--34, 1987.

\bibitem[Shv21]{S-book}
Roman Shvydkoy.
\newblock {\em Dynamics and analysis of alignment models of collective
  behavior}.
\newblock Ne\v cas Center Series. Birkh\"auser/Springer, Cham, [2021]
  \copyright 2021.

\bibitem[Shv22]{S-hypo}
Roman Shvydkoy.
\newblock Global hypocoercivity of kinetic {F}okker-{P}lanck-alignment
  equations.
\newblock {\em Kinet. Relat. Models}, 15(2):213--237, 2022.

\bibitem[Shv24]{S-EA}
Roman Shvydkoy.
\newblock Environmental averaging.
\newblock {\em EMS Surv. Math. Sci.}, 11(2):277--413, 2024.

\bibitem[ST17]{ST2}
Roman Shvydkoy and Eitan Tadmor.
\newblock Eulerian dynamics with a commutator forcing {II}: {F}locking.
\newblock {\em Discrete Contin. Dyn. Syst.}, 37(11):5503--5520, 2017.

\bibitem[ST24]{ST2024}
Roman Shvydkoy and Trevor Teolis.
\newblock Well-posedness and long time behavior of the euler alignment system
  with adaptive communication strength.
\newblock 2024.

\bibitem[Tad21]{Tadmor-notices}
Eitan Tadmor.
\newblock On the mathematics of swarming: emergent behavior in alignment
  dynamics.
\newblock {\em Notices Amer. Math. Soc.}, 68(4):493--503, 2021.

\bibitem[TT14]{TT2014}
E.~Tadmor and C.~Tan.
\newblock Critical thresholds in flocking hydrodynamics with non-local
  alignment.
\newblock {\em Philos. Trans. R. Soc. Lond. Ser. A Math. Phys. Eng. Sci.},
  372(2028):20130401, 22, 2014.

\bibitem[VCBJ{\etalchar{+}}95]{VCBCS1995}
T.~Vicsek, A.~Czir\'ok, E.~Ben-Jacob, I.~Cohen, and O.~Shochet.
\newblock Novel type of phase transition in a system of self-driven particles.
\newblock {\em Physical Review Letters}, 75(6):1226--1229, 1995.

\bibitem[Vil03]{Villani-optimal}
C.~Villani.
\newblock {\em Topics in optimal transportation}, volume~58 of {\em Graduate
  Studies in Mathematics}.
\newblock American Mathematical Society, Providence, RI, 2003.

\bibitem[VZ12]{VZ2012}
T.~Vicsek and A.~Zefeiris.
\newblock Collective motion.
\newblock {\em Physics Reprints}, 517:71--140, 2012.


\end{thebibliography}
\end{document}